\tikzstyle{vertex}=[circle, fill=black, minimum size=5pt, inner sep=0]
\theoremstyle{plain}
\newtheorem{thm}{Theorem}[section]
\newtheorem{lem}[thm]{Lemma}
\newtheorem{cor}[thm]{Corollary}
\newtheorem{obs}[thm]{Observation}
\theoremstyle{definition}
\theoremstyle{remark}
\DeclareMathOperator{\CN}{\nu}
\DeclareMathOperator{\Lo}{\mathsf L}
\DeclareMathOperator{\N}{\mathsf N}
\DeclareMathOperator{\Z2}{\mathbb Z_2}
\DeclareMathOperator{\ind}{ind_{\mathbb Z_2}}
\DeclareMathOperator{\coind}{coind_{\mathbb Z_2}}
\DeclareMathOperator{\cohom-ind}{cohom-ind_{\mathbb Z_2}}
\DeclareMathOperator{\OS}{\mathbb S}
\DeclareMathOperator{\NS}{\mathbb N}
\DeclareMathOperator{\sd}{\mathsf{sd}}
\begin{document}
\title{Quadrangulations and the Lovász complex}
\date{October 4, 2025}
\author{Carmen Arana}
\author{Mat\v ej Stehl\'ik}
\address{Université Paris Cité, CNRS, IRIF, F-75006, Paris, France.}
\email{\{arana,matej\}@irif.fr}
\thanks{The first author was funded during her master's by the PGSM program. The second author was supported by an Émergence en recherche IDEX grant from Université Paris Cité}

\begin{abstract}
    The Lovász complex $\Lo(G)$ of a graph $G$ is a deformation retract of its neighborhood complex, equipped with a canonical $\Z2$-action. We show that, under mild assumptions, $\Lo(G)$ is homeomorphic to a surface if and only if $G$ is a non-bipartite quadrangulation of the orbit space $\Lo(G)/\Z2$ in which every $4$-cycle is facial. This yields a classification of the Lovász complexes of all such quadrangulations. As an application, we contextualize a result of Archdeacon \emph{et al.}\ and Mohar and Seymour on the chromatic number of quadrangulations, obtaining a stronger statement about the $\Z2$-index.
\end{abstract}
    
\maketitle
    
\section{Introduction}

In his proof of Kneser's conjecture, Lovász~\cite{Lov78} introduced the neighborhood complex $\N(G)$ of a graph $G$ and showed that its homotopic connectivity gives a lower bound on the chromatic number of $G$. In the same paper he also defined another, lesser-known simplicial complex, now called the Lovász complex $\Lo(G)$ (see, e.g.,~\cite{dL13, Mat03}). The two complexes are homotopy equivalent---indeed, $\Lo(G)$ is a deformation retract of $\N(G)$~\cite{Cso08}. Although $\Lo(G)$ typically has more vertices than $\N(G)$, it has two advantages: it admits a canonical free $\Z2$-action, essential for applications of the Borsuk--Ulam theorem, and can sometimes have lower dimension, which is crucial for our purposes.

From a graph-theoretic perspective, these complexes may appear somewhat mysterious. Indeed, Toft~\cite{Tof95} remarked that Lovász's topological bound's ``graph theoretic meaning is not well understood.'' Our goal is to shed light on the Lovász complex in what is arguably the simplest nontrivial case---when it forms a surface. We show that the Lovász complex is closely related to \emph{quadrangulations}---graphs embedded in surfaces such that every face is bounded by four edges. More precisely, we prove the following.

\begin{thm} \label{thm:L-quad}
    If $G \not\cong K_{2,3}$ is a connected, non-bipartite quadrangulation of a surface $S$ in which every $4$-cycle is facial, then $\Lo(G)$ is a double cover of $S$. Conversely, if $\Lo(G)$ is a closed surface, no neighborhood of $G$ dominates another, and $G$ is $K_{2,3}$-free, then $G$ is a quadrangulation of $\Lo(G)/\Z2$ in which all $4$-cycles bound a face.
\end{thm}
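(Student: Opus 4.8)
The plan is to prove both implications from the combinatorial description of the cells of $\Lo(G)$, the guiding principle being that the $2$-cells of $\Lo(G)$ are squares indexed by the $4$-cycles of $G$, each $4$-cycle lifting to a pair of squares swapped by the canonical free $\Z2$-action. Writing $S := \Lo(G)/\Z2$, this means the induced quotient cell structure has the $4$-cycles of $G$ as its $2$-cells, so proving the theorem amounts to showing that the $1$-skeleton of this structure is exactly $G$, embedded as a quadrangulation.

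For the forward implication I would start from the non-bipartite quadrangulation $G \hookrightarrow S$ in which every $4$-cycle is facial. Since every face is a $4$-cycle and, by hypothesis, every $4$-cycle is a face, the square $2$-cells coming from $G$'s $4$-cycles are precisely the faces of the quadrangulation, and they tile $S$. Non-bipartiteness gives a nonzero class $w \in H^1(S;\Z2)$, the obstruction to $2$-colouring $G$; I would identify $\Lo(G)$ with the associated double cover $S_w \to S$, in which $G$ lifts to a bipartite quadrangulation and each facial $4$-cycle lifts to the two squares prescribed by the cell structure of $\Lo(G)$. Because $w \neq 0$ the cover $S_w$ is connected, so $\Lo(G)$ is a connected double cover of $S$, as required.

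For the converse, suppose $\Lo(G)$ is a closed surface. As the $\Z2$-action is free, $S = \Lo(G)/\Z2$ is a closed surface and $\Lo(G) \to S$ is a double cover (if $\Lo(G)$ is disconnected then $G$ is bipartite and $\Lo(G)$ is two disjoint copies of $S$, treated componentwise). Being a $2$-manifold forces $\Lo(G)$ to be pure $2$-dimensional, so its only $2$-cells are the squares arising from $4$-cycles, every $1$-cell lies in exactly two of them, and every vertex-link is a circle. I would then translate these two manifold conditions into graph-theoretic ones: each edge of $G$ lies in exactly two $4$-cycles, and for each vertex $v$ the neighbours of $v$ arrange cyclically so that consecutive neighbours share a common neighbour other than $v$. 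Here $K_{2,3}$-freeness guarantees that each relevant pair of neighbours has a \emph{unique} such common neighbour, so the link is a simple cycle rather than a multigraph, and the no-domination hypothesis guarantees that the induced map $G \to S$ is injective with interior-disjoint edges and that no $4$-cycle has a vertex of $G$ in its interior. Assembling these facts, $G$ is cellularly embedded in $S$ with every face a quadrilateral and every $4$-cycle facial, which is the claim.

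The main obstacle is this final passage in the converse, from local, manifold-type information (links are circles, each edge in two squares) to the global statement that $G$ is genuinely cellularly embedded as a quadrangulation with \emph{every} $4$-cycle facial. The delicate points are showing that the complementary regions of $G$ in $S$ are open disks each bounded by a single $4$-cycle, which requires ruling out two distinct squares sharing a diagonal pair of vertices (exactly where $K_{2,3}$-freeness enters), and ruling out non-facial $4$-cycles, where the no-domination hypothesis is essential: a $4$-cycle $C$ with a vertex of $G$ in its interior would force the neighbourhood of that vertex to be dominated by a neighbourhood along $C$. I expect the latter to be the hardest step, since it is a genuinely global embedding statement rather than a condition readable off a single link, and it is precisely the point at which both side hypotheses must be shown to be tight.
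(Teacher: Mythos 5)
There is a genuine gap, and it sits at the very foundation of your plan: in both directions you take as given the ``guiding principle'' that the $2$-cells of $\Lo(G)$ are squares indexed by the $4$-cycles of $G$, swapped in pairs by the $\Z2$-action. But $\Lo(G)$ is defined abstractly, as the subcomplex of $\sd(\N(G))$ induced by the sets $A$ with $\CN^2(A)=A$, and identifying its cells \emph{is} the mathematical content of the theorem, not a starting point. In the forward direction the paper first shows (using the hypotheses) that $G$ is $K_{2,3}$-free with no dominated neighborhood, and then classifies the closed sets: no-domination gives $\CN^2(\{v\})=\{v\}$ and $\CN^2(N(v))=N(v)$; the facial-$4$-cycle hypothesis gives $\CN(\{a,c\})=\{b,d\}$ for each face $abcd$, so diagonals are closed; and any other closed $A$ has a unique common neighbor $w$, forcing $A=N(w)$. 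Only after this does one know that the maximal simplices are the chains $\{\{u\},A,N(v)\}$, that each face of $G$ contributes eight triangles forming two squares, and---after checking that the link of a diagonal is a $4$-cycle and the link of a singleton or neighborhood is a subdivision of the corresponding vertex link in $G$---that $\Lo(G)$ is a surface double covering $S$. Your step ``I would identify $\Lo(G)$ with the associated double cover $S_w$'' presupposes exactly this structure; the cohomological identification of \emph{which} double cover arises is a pleasant supplement but no substitute. That the structure is not free of charge is shown by the paper's Observation that when some $4$-cycle is non-facial, the edge $\{\{a\},N(b)\}$ lies in at least three triangles and $\Lo(G)$ is not a surface at all.

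In the converse your proposed mechanisms are also off target. The paper never builds an embedding of $G$ into $S$ by hand and verifies that complementary regions are disks (the step you correctly anticipate as hardest): instead, since every triangle of the surface $\Lo(G)$ is a chain $\{u\}\subsetneq A\subsetneq N(v)$, the edges around a middle vertex $A$ alternate between singletons and neighborhoods, so $A$ lies in an even number of triangles, and $K_{2,3}$-freeness caps this at four (three elements of $A$ together with two common neighbors would give a $K_{2,3}$). Hence the subgraph $\widetilde G$ induced by singletons and neighborhoods quadrangulates $\Lo(G)$, carries the free $\Z2$-action $\{v\}\leftrightarrow N(v)$ (well-defined by no-domination), and $G\cong\widetilde G/\Z2$ quadrangulates the surface $\Lo(G)/\Z2$; the global embedding statement falls out of the quotient, with no separate argument. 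Moreover, your specific claim that a non-facial $4$-cycle ``would force the neighbourhood of [an interior] vertex to be dominated'' is both circular---``interior'' presupposes the embedding still under construction---and incorrect in substance: non-facial $4$-cycles need not create domination. What actually rules them out is the same link count: for any $4$-cycle $abcd$, the set $A=\CN^2(\{a,c\})$ is a vertex of $\Lo(G)$ in a triangle $\{\{a\},A,N(b)\}$, and the surface plus $K_{2,3}$-freeness force $A=\{a,c\}$ and $\CN(A)=\{b,d\}$, so the cycle bounds a face of the quotient quadrangulation. As written, then, your proposal assumes the cell structure it should establish, and the two genuinely hard steps---classifying the sets with $\CN^2(A)=A$, and the four-triangles-per-diagonal count---are missing.
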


As a corollary of our main result, we classify all Lovász complexes of quadrangulations of surfaces (subject to mild assumptions, see Corollary~\ref{cor:class_L}) and, conversely, all graphs whose Lovász complex is a surface (also subject to mild assumptions, see Corollary~\ref{cor:class_graphs}).

The chromatic number, denoted $\chi$, of surface quadrangulations has been studied extensively. Hutchinson~\cite{Hut95} showed that all quadrangulations of orientable surfaces with sufficiently large edge-width are $3$-colorable. The non-orientable case exhibits different behavior: Youngs~\cite{You96} proved that every quadrangulation of the projective plane is either bipartite or $4$-chromatic. This result was later extended to other non-orientable surfaces by Archdeacon \emph{et al.}~\cite{AHNNO01}, and independently by Mohar and Seymour~\cite{MS02}. A quadrangulation $G$ of a non-orientable surface $S$ is \emph{odd} if it contains an odd cycle $C$ such that cutting $S$ along $C$ results in an orientable surface.

\begin{thm}[\cite{AHNNO01,MS02}]\label{thm:archdeacon}
    If $G$ is an odd quadrangulation of a non-orientable surface, then $\chi(G) \geq 4$.
\end{thm}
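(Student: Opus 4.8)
The plan is to deduce $\chi(G)\ge 4$ from a lower bound on the $\Z2$-index of the Lovász complex, via the topological Lovász inequality $\chi(G)\ge\ind(\Lo(G))+2$. This inequality holds because a proper $k$-colouring is a homomorphism $G\to K_k$, which induces a $\Z2$-map $\Lo(G)\to\Lo(K_k)\simeq_{\Z2}S^{k-2}$, whence $\ind(\Lo(G))\le k-2$. It therefore suffices to prove $\ind(\Lo(G))\ge 2$.

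First I would reduce to the case where every $4$-cycle of $G$ is facial, so that Theorem~\ref{thm:L-quad} applies. Since $G$ is odd it contains an odd cycle and is in particular non-bipartite, and a graph cellularly embedded in a connected surface has connected $1$-skeleton; moreover $K_{2,3}$ is bipartite, hence not odd, so the excluded case cannot arise. The only remaining hypothesis is the facial condition. If $G$ has a non-facial $4$-cycle, I would use it to pass to a strictly smaller odd quadrangulation of a non-orientable surface with no larger chromatic number, so that a minimal counterexample has all its $4$-cycles facial. Making this reduction precise, while simultaneously preserving oddness, non-orientability, and the quadrangulation structure, is the step I expect to be the main obstacle.

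Assuming every $4$-cycle is facial, Theorem~\ref{thm:L-quad} presents $\Lo(G)$ as a double cover of $S=\Lo(G)/\Z2$, classified by a class $w\in H^1(S;\Z2)$. The key observation is that $w$ is the \emph{oddness class} of the quadrangulation: since every face of $G$ is a $4$-cycle, the cochain assigning $1$ to each edge is a mod-$2$ cocycle, and its class evaluates on a cycle $C$ as $w([C])=|C|\bmod 2$. This is exactly the obstruction to $2$-colouring $G$, matched by the canonical involution of $\Lo(G)$, which swaps the two sides of a bipartition. To bound the index I would compute $w^2\in H^2(S;\Z2)\cong\Z2$ using the Wu formula $w\cup w=w\cup w_1(S)$, where $w_1(S)$ is the first Stiefel--Whitney class of $S$.

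Finally I would invoke oddness. By definition there is an odd cycle $C_0$ such that cutting $S$ along $C_0$ yields an orientable surface, which is precisely the statement that $C_0$ is Poincaré dual to $w_1(S)$, i.e.\ $\mathrm{PD}[C_0]=w_1(S)$. Pairing with the mod-$2$ fundamental class then gives
\[
\langle w^2,[S]\rangle=\langle w\cup w_1(S),[S]\rangle=\langle w,\mathrm{PD}(w_1(S))\rangle=\langle w,[C_0]\rangle=|C_0|\bmod 2=1 ,
\]
so $w^2\ne 0$. Since any $\Z2$-map $\Lo(G)\to S^m$ would descend to a map $S\to\RP^m$ pulling the generator back to $w$ and hence forcing $w^{m+1}=0$, the nonvanishing of $w^2$ yields $\ind(\Lo(G))\ge 2$, and therefore $\chi(G)\ge 4$.
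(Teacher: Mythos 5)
Your main computation is sound and, at its core, parallels the paper's own strategy: the paper also deduces Theorem~\ref{thm:archdeacon} by proving $\ind(\Lo(G))\geq 2$ (Theorem~\ref{thm:archdeacon-ind}) and invoking Theorem~\ref{thm:lovasz}, and its Theorem~\ref{thm:cohom_quad} is likewise a cohomological-index bound, i.e.\ a proof that the class $w\in H^1(S;\Z2)$ classifying the double cover $\Lo(G)\to S$ has nonzero square. Where you differ is in how the nonvanishing of $w^2$ is obtained: the paper adapts the combinatorial winding-number argument of~\cite{AHNNO01}, counting ``gray triangles'' in a symmetric triangulation of the quadrangulation $\widetilde G$ of $\Lo(G)$ and showing their parity equals that of the orientizing cycle, whereas you identify $w$ with the mod-$2$ edge-length cocycle (correct, since $\widetilde G$ is the bipartite double cover of $G$ and every face is even) and compute $\langle w^2,[S]\rangle = \langle w\cup w_1(S),[S]\rangle = \langle w,[C_0]\rangle = 1$ via the Wu formula and the identity $\mathrm{PD}[C_0]=w_1(S)$ for an orientizing cycle $C_0$. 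Those steps are all correct, and your characteristic-class route is a legitimate substitute for the paper's combinatorial count in the case where every $4$-cycle is facial.

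The genuine gap is exactly where you flagged it: the reduction to the all-$4$-cycles-facial case, and your proposed minimal-counterexample surgery would be hard to salvage. A non-facial $4$-cycle may be non-contractible, so cutting along it and recapping can change or disconnect the surface and need not preserve non-orientability or oddness; moreover the operations that keep the result a quadrangulation (doubling the cut cycle, re-quadrangulating the capped holes) destroy the subgraph relation you would need for $\chi$-monotonicity, and there is no natural $\Z2$-map between the Lovász complexes of $G$ and the surgered graph, so the index statement you are actually proving does not descend to minimal counterexamples at all. The paper sidesteps surgery entirely: when some $4$-cycle is non-facial, $\Lo(G)$ fails to be a surface (Observation~\ref{obs:not_surface}), but the $\Z2$-invariant subcomplex $\mathsf K\subseteq\Lo(G)$ induced by the diagonals, singletons and neighborhoods is still a double cover of $S$, and since any $\Z2$-map $\Lo(G)\to\OS^m$ restricts to $\mathsf K$, one gets $\ind(\Lo(G))\geq\ind(\mathsf K)\geq 2$. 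Your Wu-formula computation applies verbatim to $\mathsf K$, whose classifying class is the same parity cocycle $w$; replacing your reduction step by this subcomplex argument closes the gap and completes your proof.
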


The proof in~\cite{AHNNO01,MS02} uses the \emph{winding number}, so it is clearly topological, yet it is not obvious how it relates to the theory of \emph{topological methods} originating in Lovász's proof of Kneser's conjecture~\cite{Lov78}; see~\cite{dL13,Koz08,Mat03,MZ04}. A modern interpretation of Lovász's bound uses the \emph{$\Z2$-index} (definitions are given in Section~\ref{sec:preliminaries}).

\begin{thm}[\cite{dL13,Koz08,Mat03,MZ04}]\label{thm:lovasz}
    If $G$ is a (non-empty) graph, then $\chi(G) \geq \ind(\Lo(G))+2$.
\end{thm}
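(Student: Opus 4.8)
The plan is to derive the bound from three ingredients: the functoriality of $\Lo$ as a $\Z2$-space, the monotonicity of the index under $\Z2$-maps, and a computation of the index for complete graphs. A proper coloring of $G$ with $\chi(G)$ colors is precisely a graph homomorphism $G \to K_{\chi(G)}$, so it suffices to prove that any homomorphism $G \to H$ yields $\ind(\Lo(G)) \le \ind(\Lo(H))$ and that $\ind(\Lo(K_n)) = n-2$. Taking $H = K_{\chi(G)}$ then gives $\ind(\Lo(G)) \le \chi(G) - 2$, which is exactly the claim.

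First I would establish functoriality. A graph homomorphism $\phi \colon G \to H$ sends complete bipartite subgraphs to complete bipartite subgraphs (equivalently, it preserves the existence of common neighbors, since an edge $vw$ maps to a genuine edge $\phi(v)\phi(w)$), so it induces a simplicial map on the underlying complexes that commutes with the canonical $\Z2$-action swapping the two sides. The subtlety is that $\Lo$ itself may only be functorial up to $\Z2$-homotopy; the cleanest way around this is to route the argument through the box complex, which is manifestly functorial and carries a free $\Z2$-action, using that $\Lo(G)$ and the box complex are $\Z2$-homotopy equivalent. Either route produces a $\Z2$-map $\Lo(G) \to \Lo(H)$.

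Monotonicity is then immediate from the definition: $\ind(X)$ is the least $n$ admitting a $\Z2$-map $X \to \OS^{n}$ (with the antipodal action), and composing such a map for $Y$ with a $\Z2$-map $X \to Y$ gives a $\Z2$-map $X \to \OS^{\ind(Y)}$, whence $\ind(X) \le \ind(Y)$. Applied to the homomorphism-induced map, this yields $\ind(\Lo(G)) \le \ind(\Lo(K_{\chi(G)}))$.

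Finally I would compute $\ind(\Lo(K_n))$. The neighborhood complex $\N(K_n)$ is the boundary of the $(n-1)$-simplex, hence homeomorphic to $\OS^{n-2}$, and since $\Lo(K_n)$ is a $\Z2$-deformation retract one checks that $\Lo(K_n)$ is $\Z2$-homotopy equivalent to $\OS^{n-2}$ with the antipodal action. It remains to show $\ind(\OS^{n-2}) = n-2$: the identity map gives $\ind \le n-2$, while the reverse inequality is precisely the Borsuk--Ulam theorem, which forbids a $\Z2$-map $\OS^{n-2} \to \OS^{n-3}$. The main obstacle is the equivariant bookkeeping throughout: verifying that the induced maps are genuinely $\Z2$-equivariant, and that the identification of $\Lo(K_n)$ with the antipodal sphere respects the action, since it is exactly this equivariance that lets Borsuk--Ulam replace a merely homotopical statement and force the dimension bound.
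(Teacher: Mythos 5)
The paper offers no proof of Theorem~\ref{thm:lovasz} at all---it is quoted from the cited literature---and your proposal is exactly the standard argument found in those references (e.g.\ Matou\v{s}ek's book and Matou\v{s}ek--Ziegler): factor the coloring $G \to K_{\chi(G)}$ through the functorial box complex, which is $\Z2$-homotopy equivalent to $\Lo$, apply monotonicity of $\ind$ under $\Z2$-maps, and compute $\ind(\Lo(K_n)) = n-2$ by identifying $\Lo(K_n) = \sd(\partial \Delta^{n-1})$ with the antipodal $(n-2)$-sphere and invoking Borsuk--Ulam. Your proposal is correct, including the important caveat that $\Lo$ itself is not directly functorial, so the detour through the box complex is the right move.
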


In the final part of our paper, we prove the following strengthening of Theorem~\ref{thm:archdeacon}.
\begin{thm}\label{thm:archdeacon-ind}
    If $G$ is an odd quadrangulation of a non-orientable surface, then $\ind(\Lo(G)) \geq 2$.
\end{thm}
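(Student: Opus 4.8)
The plan is to prove the stronger cohomological statement that the first Stiefel--Whitney class $w\in H^1(\Lo(G)/\Z2;\Z2)$ of the double cover $\Lo(G)\to\Lo(G)/\Z2$ satisfies $w^2\neq 0$. This alone forces $\ind(\Lo(G))\ge 2$: a $\Z2$-map $f\colon\Lo(G)\to\OS^1$ would descend to $\bar f\colon\Lo(G)/\Z2\to\RP^1$ through which the double cover is pulled back, so $w=\bar f^{*}(t)$ for the generator $t\in H^1(\RP^1;\Z2)$, whence $w^2=\bar f^{*}(t^2)=0$ because $H^2(\RP^1;\Z2)=0$, a contradiction. So the entire argument reduces to computing the square of $w$.

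I would first handle the case where Theorem~\ref{thm:L-quad} applies directly, namely $G$ connected, $G\not\cong K_{2,3}$ and every $4$-cycle facial (an odd quadrangulation is non-bipartite, hence $\not\cong K_{2,3}$). Then $\Lo(G)$ is a connected double cover of $S$, so $\Lo(G)/\Z2=S$ and $w\in H^1(S;\Z2)$. Tracing a cycle of $G$ through the cells of $\Lo(G)=\mathrm{Hom}(K_2,G)$ shows it lifts to a loop exactly when it has even length, so $w$ is the \emph{parity class}, characterised by $\langle w,[\gamma]\rangle\equiv\mathrm{length}(\gamma)\pmod 2$. On a closed surface Wu's formula gives $x^2=w_1(S)\cup x$ for every $x\in H^1(S;\Z2)$ (since $\mathrm{Sq}^1 x=x^2$ and $\mathrm{Sq}^1 x=v_1\cup x$ with $v_1=w_1(S)$), hence $w^2=w_1(S)\cup w$. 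The odd-quadrangulation hypothesis --- an odd cycle $C$ for which cutting $S$ along $C$ yields an orientable surface --- says precisely that $w_1(S)$ restricts to $0$ on $S\setminus C$, i.e. that $[C]$ is Poincaré dual to $w_1(S)$. Therefore
\[
\langle w^2,[S]\rangle=\langle w_1(S)\cup w,[S]\rangle=\langle w,\mathrm{PD}(w_1(S))\rangle=\langle w,[C]\rangle\equiv\mathrm{length}(C)\equiv 1\pmod 2,
\]
so $w^2\neq 0$ and $\ind(\Lo(G))\ge 2$.

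It remains to remove the facial hypothesis for a general odd quadrangulation, which I would do by index-monotone reductions, using that any homomorphism $H\to G$ induces a $\Z2$-map $\Lo(H)\to\Lo(G)$ and hence $\ind(\Lo(H))\le\ind(\Lo(G))$. If a non-facial $4$-cycle $Q$ bounds a disk, I replace the quadrangulated disk by a single $4$-gon; the result $G'$ is a subgraph of $G$, so $\ind(\Lo(G'))\le\ind(\Lo(G))$, and $G'$ stays an odd quadrangulation of the same $S$ (a contractible cycle in a quadrangulation has even length, so the odd cycle can be rerouted along $Q$ without changing its parity or homology class). If $Q$ is essential, I cut $S$ along $Q$ and cap the resulting boundary $4$-cycles with single faces; folding the two copies of $Q$ back together is a homomorphism $G''\to G'$, so again $\ind(\Lo(G''))\le\ind(\Lo(G'))$. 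Each step strictly simplifies the surface, so the process terminates in a connected odd quadrangulation all of whose $4$-cycles are facial, to which the computation above applies; monotonicity then propagates $\ind\ge 2$ back up to $G$.

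The main obstacle is the essential-$4$-cycle reduction. I must check that cutting along an even $4$-cycle $Q$ (so $[Q]\neq[C]$) keeps the surface non-orientable and keeps an odd certifying cycle with the cutting property, and that the folding map is a genuine homomorphism. Non-orientability persists because $w_1(S)=\mathrm{PD}[C]$ is not supported on $Q$, hence stays nonzero on the complement of $Q$; the delicate part is controlling the interaction of $C$ and $Q$ --- making them disjoint, or rerouting $C$ across the cut --- so that oddness is preserved at every stage. Once this bookkeeping is in place, the cohomological computation via Wu's formula does all the substantive work.
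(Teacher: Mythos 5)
Your treatment of the core case (all $4$-cycles facial) is correct, and it is a genuinely different route from the paper's. You identify the classifying class $w \in H^1(S;\Z2)$ of the double cover $\Lo(G) \to S$ with the mod-$2$ length class (legitimate, since $\widetilde G$, the subgraph of $\Lo(G)$ on singletons and neighborhoods, is the bipartite double cover of $G$, and face boundaries are even so parity is well defined on $H_1(S;\Z2)$), and then evaluate $w^2$ by Wu's formula and Poincaré duality against the orientizing odd cycle. The paper instead computes the same Stiefel--Whitney height combinatorially, in Theorem~\ref{thm:cohom_quad}: it adapts the winding-number argument of Archdeacon \emph{et al.} to the symmetric quadrangulation $\widetilde G$ of $\Lo(G)$, shows that the number of faces in cyclic order with respect to a labelling has the parity of the orientizing cycle, and converts this into a count of gray triangles in Matoušek's sense. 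Your closed-cohomology argument is shorter and more conceptual (it explains \emph{why} oddness is exactly the condition $\langle w \cup w_1(S),[S]\rangle = 1$); the paper's count is elementary and self-contained, and yields the exact value of the cohomological index in both the odd and non-odd cases. Your deduction that $w^2 \neq 0$ forces $\ind(\Lo(G)) \geq 2$ by factoring a hypothetical $\Z2$-map to $\OS^1$ through $\RP^1$ is sound; it is the relevant instance of inequality~\eqref{eq:inequalities}.

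The gap is in your reduction of a general odd quadrangulation to the facial case, and it is genuine, not mere bookkeeping. First, an essential $4$-cycle $Q$ in a non-orientable surface may be \emph{one-sided}; cutting along it then produces a single boundary $8$-cycle, which cannot be capped by one quadrilateral face, so the result is no longer a quadrangulation --- your ``two copies of $Q$'' picture silently assumes $Q$ is two-sided. Second, even for two-sided $Q$, cutting may disconnect $S$, and the certifying odd cycle $C$ may cross $Q$: rerouting produces an odd closed \emph{walk}, and extracting a simple cycle from a closed walk can change its $\Z2$-homology class, while ``cutting along'' is only defined for embedded cycles --- exactly the point you flag as delicate and leave unresolved. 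The paper avoids all surgery: it never modifies $G$. When $\Lo(G)$ fails to be a surface (Observation~\ref{obs:not_surface}), it passes to the $\Z2$-subcomplex $\mathsf K \subseteq \Lo(G)$ induced by the diagonals, singletons and neighborhoods; this $\mathsf K$ is still a double cover of $S$, the argument of Theorem~\ref{thm:cohom_quad} applies to it verbatim, and monotonicity of the index under the $\Z2$-inclusion $\mathsf K \hookrightarrow \Lo(G)$ gives $\ind(\Lo(G)) \geq \ind(\mathsf K) \geq 2$. The same device rescues your approach: $w$ and the Wu-formula computation live on $S = \mathsf K/\Z2$, so you can run your cohomological argument on $\mathsf K$ directly and dispense with the cut-and-cap reduction entirely.
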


A weaker topological bound on the chromatic number can be obtained by replacing the $\Z2$-index in Theorem~\ref{thm:lovasz} by the \emph{$\Z2$-coindex} (see Section~\ref{sec:preliminaries} for a definition). Spaces where these two invariants do not coincide were called \emph{non-tidy} spaces by Matoušek~\cite[pp.~100--101]{Mat03}. While examples of non-tidy spaces are known (see, e.g.~\cite{Cso05,Mat03}), we are unaware of explicit examples of graphs whose Lovász complex is non-tidy.

Using the universal cover, one can show that $\coind(S) = 1$ for any closed surface $S$ other than the sphere; see, e.g.,~\cite[p.~100]{Mat03}. Therefore, by Theorem~\ref{thm:L-quad}, if $G$ is an odd quadrangulation of a non-orientable surface other than the projective plane, with all $4$-cycles facial, then $\coind(\Lo(G)) = 1$. On the other hand, Theorem~\ref{thm:archdeacon-ind} implies that $\ind(\Lo(G)) = 2$. We conclude that $\Lo(G)$ is non-tidy for such graphs.

\section{Preliminaries}
\label{sec:preliminaries}

A \emph{graph} $G$ is a pair of sets $V(G)$ and $E(G)$, the set of \emph{vertices} and \emph{edges} of $G$, respectively. If $G$ and $H$ are isomorphic graphs, we write $G \cong H$. We shall assume throughout that $V(G)$ is finite. We will say that a graph is \emph{$K_{2,3}$-free} if it does not contain the complete bipartite graph $K_{2,3}$ as a subgraph. A \emph{$k$-coloring} of a graph $G$ is a function $c:V(G) \to \{1, \ldots, k\}$ such that $c(u) \neq c(v)$ for every edge $\{u,v\} \in E(G)$. The \emph{chromatic number} of $G$ is the smallest $k$ for which a $k$-coloring of $G$ exists. Let $G$ be a graph, $v \in V(G)$ a vertex, and $A \subseteq V(G)$ a set of vertices. The \emph{neighborhood} of $v$ is denoted by $N(v)$. We will say that the neighborhood of $v$ \emph{dominates} the neighborhood of $u$ if $N(u) \subseteq N(v)$. The set of \emph{common neighbors} of $A$ is
\[
    \CN(A) := \{v \in V(G) \mid \{u, v\} \in E(G) \text{ for all } u \in A\}.
\]
For more background on graph-theoretic notions, see Bondy and Murty~\cite{BM08}.

If $X$ and $Y$ are homeomorphic topological spaces, we write $X \cong Y$. A \emph{double cover} of a topological space $X$ is a $\Z2$-space $\widetilde{X}$ such that the orbit space $\widetilde{X} / \Z2$ is homeomorphic to $X$. For $k \geq 1$, a topological space $X$ is \emph{$k$-connected} if the homotopy groups $\pi_1(X)$, \ldots , $\pi_k(X)$ are all trivial. A \emph{surface} is a connected compact Hausdorff topological space $S$ locally homeomorphic to an open disc. According to the classification of surfaces, a closed surface is uniquely determined, up to homeomorphism, by its Euler characteristic and orientability. We denote the orientable surface of genus $k$ by $\OS_k$, and the non-orientable surface of genus $k$ by $\NS_k$. Note that $\OS_k$ and $\NS_k$ have Euler characteristic $2-2k$ and $2-k$, respectively. For further topological background, see Hatcher~\cite{Hat02}.

A \emph{quadrangulation} of a surface $S$ is a graph embedded in $S$ such that the boundary of each face is a $4$-cycle. Following~\cite{AHNNO01}, a quadrangulation of a non-orientable surface is called \emph{odd} if there exists an odd cycle in the graph whose removal (i.e., cutting the surface along the cycle) yields an orientable surface. A cycle in an embedded graph is \emph{one-sided} if a small strip around it forms a Möbius strip. For more background on graphs embedded in surfaces, see Mohar and Thomassen~\cite{MT01}.

A \emph{$\Z2$-space} is a topological space equipped with a free $\Z2$-action. A canonical example of a $\Z2$-space is the $n$-dimensional sphere $\mathbb S^n$ equipped with the antipodal action. Given $\Z2$-spaces $X$ and $Y$, we write $X \xrightarrow{\Z2} Y$ if there exists a continuous $\Z2$-equivariant map from $X$ to $Y$. We define the following invariants:
\begin{align*}
    \ind(X)   &= \min \{n \in \mathbb{N} \mid X \xrightarrow{\Z2} \mathbb S^n\}, \\
    \coind(X) &= \max \{n \in \mathbb{N} \mid \mathbb S^n \xrightarrow{\Z2} X\}.
\end{align*}
Note that the Borsuk--Ulam theorem can be succinctly expressed by the inequality $\ind(\OS^n) \geq n$. Generalizations of the Borsuk--Ulam theorem to manifolds other than the sphere---also expressible using the $\Z2$-index---have been studied in~\cite{BM22,Mus12,GHZ10}. We will also make use of the \emph{cohomological index} (also known as the \emph{Stiefel--Whitney height}), denoted $\cohom-ind(X)$, defined as the height of the first Stiefel–Whitney class of $X$. The following inequalities hold for any $k$-connected $\Z2$-space $X$:
\begin{equation}\label{eq:inequalities}
    k+1 \leq \coind(X) \leq \cohom-ind(X) \leq \ind(X).
\end{equation}
See Kozlov~\cite{Koz08} or Matoušek~\cite{Mat03} for more details.

An (abstract) \emph{simplicial complex} $\mathsf K$ is a finite hereditary set system. Its vertex set is denoted by $V(\mathsf K)$, and its barycentric subdivision by $\sd(\mathsf K)$. Any abstract simplicial complex $\mathsf K$ can be realized as a topological space $|\mathsf K|$ in $\mathbb R^d$ for some $d$. The \emph{neighborhood complex} of $G$ is the simplicial complex
\[
    \N(G) := \{A \subseteq V(G) \mid \CN(A) \neq \emptyset\}.
\]
\begin{figure} 
    \centering
    \begin{tikzpicture}[scale=1.2,every edge/.append style={semithick}]
        \node[vertex,label=above:\small $1$] (1) at (0.33,-0.33) {};
        \node[vertex,label=below right:\small $2$] (2) at (1,-1) {};
        \node[vertex,label=below left:\small $3$] (3) at (-1,-1) {};
        \node[vertex,label=above:\small $4$] (4) at (-0.33,0.33) {};
        \node[vertex,label=above right:\small $5$] (5) at (1,1) {};
        \node[vertex,label=above left:\small $6$] (6) at (-1,1) {};
        \draw (1) edge (2)
              (1) edge (3)
              (1) edge (4)
              (2) edge (3)
              (2) edge (5)
              (3) edge (4)
              (3) edge (6)
              (4) edge (5)
              (5) edge (6);
    \end{tikzpicture}
    \hfil 
    \begin{tikzpicture}[scale=1.2,every edge/.append style={semithick}]
        \foreach \i in {1,2,...,8}{
            \coordinate (p\i) at (45+45*\i:1);
        }
        \fill[fill=gray,line cap=round] (p2) -- (p4) -- (p3) -- cycle;
        \fill[fill=gray,line cap=round] (p6) -- (p8) -- (p7) -- cycle;
        \draw[semithick] (p1)
        \foreach \i in {2,3,...,8}{
            -- (p\i)
        }
        -- cycle;
        \draw (p2) edge (p4);
        \draw (p6) edge (p8);
        \node at ($(p1)+(0,0.2)$) {\small $\{1\}$};
        \node at ($(p2)+(-0.6,0)$) {\small $\{1,3,5\}$};
        \node at ($(p3)+(-0.5,0)$) {\small $\{3,5\}$};
        \node at ($(p4)+(-0.3,0)$) {\small $\{3\}$};
        \node at ($(p5)+(0,-0.2)$) {\small $\{2,3,4\}$};
        \node at ($(p6)+(0.5,0)$) {\small $\{2,4\}$};
        \node at ($(p7)+(0.6,0)$) {\small $\{2,4,6\}$};
        \node at ($(p8)+(0.8,0)$) {\small $\{1,2,4,6\}$};
    \end{tikzpicture}
    \caption{A graph (left) and its Lovász complex (right).}
    \label{Lov}
\end{figure}
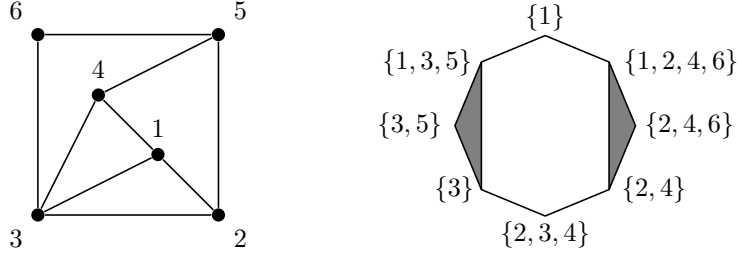
The \emph{Lovász complex} of $G$, denoted $\Lo(G)$ (see Figure~\ref{Lov}), is the subcomplex of the barycentric subdivision $\sd(\N(G))$ induced by the vertex set
\[
    V(\Lo(G)) = \{A \subseteq V(G) \mid \CN^2(A) = A\}.
\]
For more on topological methods in combinatorics, see Matoušek~\cite{Mat03}, de Longueville~\cite{dL13}, and Kozlov~\cite{Koz08}.

\section{Lovász complex of quadrangulations} \label{ID} 

The goal of this section is to prove the ``forward'' direction of Theorem~\ref{thm:L-quad}. We begin with two simple lemmas.

\begin{lem}\label{order of hyp}
    Let $G$ be a connected non-bipartite quadrangulation of a surface in which every $4$-cycle is facial. Then either $G \cong K_{2,3}$, or $G$ is $K_{2,3}$-free and no neighborhood dominates another.
\end{lem}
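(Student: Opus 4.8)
The plan is to prove the contrapositive of the dichotomy, or rather to handle the two bad possibilities directly: I want to show that if $G$ is a connected non-bipartite quadrangulation in which every $4$-cycle is facial, and $G$ is \emph{not} $K_{2,3}$-free, then $G \cong K_{2,3}$; and separately that $K_{2,3}$-freeness forces the no-domination condition. So first I would suppose that $G$ contains a copy of $K_{2,3}$ as a subgraph, say with parts $\{u,w\}$ and $\{x,y,z\}$. The three $4$-cycles $uxwy$, $uywz$, $uxwz$ are genuine $4$-cycles of $G$, so by hypothesis each one bounds a face. The key geometric observation I would try to extract is that a vertex $v$ together with its neighborhood in a quadrangulation is constrained: around $v$ the faces are quadrilaterals, and two common neighbors $u,w$ of the three vertices $x,y,z$ already use up a lot of the local rotation. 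The hard part will be arguing that these three facial $4$-cycles, glued along the two paths through $u$ and $w$, cannot be extended — i.e., that there is no room in the surface for any further vertex or edge without creating either a non-facial $4$-cycle or a face that is not a quadrilateral. Concretely I expect to show that $x,y,z$ have no neighbors other than $u$ and $w$, and that $u,w$ have no neighbors other than $x,y,z$, by a local analysis of the face structure around each vertex.

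For the local analysis, I would use that in a quadrangulation every edge lies on exactly two faces and each face is a $4$-cycle, so the faces incident to a fixed vertex $v$ cyclically partition the edges at $v$ into a rotation system. Suppose $x$ had a further neighbor $t \notin \{u,w\}$. The edge $xt$ lies on two quadrilateral faces; I would trace these faces and show that they must reuse $u$ or $w$ (since $x$'s only ``far'' connections go through $u,w$), thereby producing a short cycle through $x,t$ and one of $u,w$. The aim is to force either a triangle (impossible in a quadrangulation, which is bipartite-free of triangles by the face-length parity) or a new $4$-cycle that is not among the three facial ones, contradicting the all-$4$-cycles-facial hypothesis. Carrying this out for $x,y,z$ and then for $u,w$ should pin down $V(G)$ to exactly these five vertices and $E(G)$ to exactly the six edges of $K_{2,3}$; connectedness then gives $G \cong K_{2,3}$. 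I should double-check the parity/orientability bookkeeping here, since the surface could be orientable or not, but the argument is purely local so orientability should not matter.

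For the second assertion, assume $G$ is $K_{2,3}$-free and suppose for contradiction that some neighborhood dominates another, i.e.\ $N(u) \subseteq N(v)$ for distinct vertices $u \neq v$. Since $G$ is non-bipartite and connected it has an odd cycle and in particular is not a single edge or isolated vertex, so $u$ has at least two neighbors (a quadrangulation is locally like a disc, so every vertex has degree at least $2$, in fact the girth-$4$ condition forces degree $\geq 2$ and typically more). Picking two distinct neighbors $a,b \in N(u) \subseteq N(v)$, the four vertices $u,v,a,b$ with edges $ua,ub,va,vb$ form a $K_{2,3}$ minus one part-vertex — that is, a $4$-cycle $uavb$. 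By hypothesis this $4$-cycle is facial. But if $u$ has a \emph{third} neighbor $c \in N(u)$, then $c \in N(v)$ too, and $\{u,v\}$ together with $\{a,b,c\}$ is exactly a $K_{2,3}$ subgraph, contradicting $K_{2,3}$-freeness. The remaining case is $\deg(u) = 2$ with $N(u) = \{a,b\}$; then I need to rule out $N(u) \subsetneq N(v)$ being compatible with $uavb$ being a face. The obstacle I anticipate is precisely this degree-two boundary case: I would resolve it by noting that $uavb$ facial and $u$ of degree $2$ means the two faces at $u$ are both $uavb$, which is impossible on a surface (each edge bounds two \emph{distinct} faces in a quadrangulation of a closed surface, or the two faces at $u$ would coincide, forcing $G \cong K_{2,3}$ or a degenerate embedding already excluded). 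This pins down that domination cannot occur once $K_{2,3}$ is forbidden, completing the proof.
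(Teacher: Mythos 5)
Your proposal has genuine gaps in both halves, and in each case the missing ingredient is the same one the paper's proof is built on: the fact that every edge of a quadrangulation lies on \emph{exactly two} faces, exploited through common neighbors. In the first half (a $K_{2,3}$ subgraph forces $G \cong K_{2,3}$), your central step is never proven and your contradiction mechanism is misstated. If $x$ has an extra neighbor $t$, each face on the edge $xt$ is a $4$-cycle whose fourth vertex is some \emph{neighbor of $x$} --- but that neighbor could a priori be yet another new vertex $t'$, so there is no justification for the claim that these faces ``must reuse $u$ or $w$.'' Moreover, even when such a face does pass through $u$, the contradiction is not that you get ``a new $4$-cycle that is not among the three facial ones'': the hypothesis places no bound on how many facial $4$-cycles exist, so a new $4$-cycle would simply bound a new face. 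The actual contradiction is that an edge such as $xu$ would then lie on \emph{three} faces. The paper avoids all of this with a two-line counting argument: each of the six edges $\{a_i,b_j\}$ of the $K_{2,3}$ lies in exactly two of the three $4$-cycles $a_1b_1a_2b_2$, $a_1b_2a_2b_3$, $a_1b_3a_2b_1$; since all three bound faces, no edge of the $K_{2,3}$ can lie on any further face, so these three faces close up the surface, and connectedness gives $G \cong K_{2,3}$. Your rotation-system language could in principle be pushed to the same conclusion (the corners at each vertex of the $K_{2,3}$ already close a full rotation, so no further edge can be attached anywhere), but you never carry this out.

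In the second half, your $\deg(u)\geq 3$ case is correct and matches the paper, but the degree-$2$ case has a real gap: the assertion that ``$uavb$ facial and $u$ of degree $2$ means the two faces at $u$ are both $uavb$'' does not follow from anything you have established. Since $\deg(u)=2$, the second face on the edge $ua$ is a $4$-cycle $aubx$ for some common neighbor $x$ of $a$ and $b$, and nothing forces $x=v$. You must split into cases: if $x \neq v$, then $u$, $v$, $x$ are three distinct common neighbors of $\{a,b\}$, i.e., a $K_{2,3}$, contradicting $K_{2,3}$-freeness; only then may you conclude $x=v$, in which case the two faces share the boundary cycle $uavb$, their union is a sphere containing all of the connected graph $G$, so $G$ is a $4$-cycle --- excluded because $G$ is non-bipartite. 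This is precisely how the paper closes the case: it first notes that non-bipartiteness rules out $G$ being a $4$-cycle, so the second face at $b$ has a fourth vertex $e \neq a$, and then $a$, $b$, $e$ are three common neighbors of $\{c,d\}$, yielding the contradiction. As written, your proof skips the step that does all the work.
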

    
\begin{proof}
    Suppose $G$ contains $K_{2,3}$ with shores $\{a_1,a_2\}$ and $\{b_1,b_2,b_3\}$. Then the cycles $\{a_1,b_1,a_2,b_2\}$,$\{a_1,b_2,a_2,b_3\}$ and $\{a_1,b_3,a_2,b_1\}$ must all bound faces. Each edge $\{a_i,b_j\}$ is already in two faces, so cannot be in any other faces. Therefore, $G \cong K_{2,3}$.

    Now suppose $G$ is $K_{2,3}$-free. Let $a,b$ be two distinct vertices of $G$ such that $N(b) \subseteq N(a)$. As $G$ is a quadrangulation, $|N(b)| \geq 2$. If $|N(b)|\geq 3$, then $|\CN(\{a,b\})|=|N(b)|\geq 3$, so $G$ contains a $4$-cycle which does not bound a face.

    It remains to consider the case $|N(b)|=2$, so suppose $N(b) = \{c,d\}$. As $G$ is non-bipartite, $G$ is not a $4$-cycle. The vertex $b$ has only $c,d$ as neighbors, but the edge $\{c,b\}$ is in two faces by definition. The second face must also include $\{b,d\}$ because $b$ is not incident to other edges, and each face must have two edges incident to each vertex. Such a face has one more vertex $e \neq a$ such that $\{c,e\}$ and $\{d,e\}$ are edges of $G$. But that implies $|\CN(\{c,d\})| \geq 3$, so we again conclude that $G$ contains a $4$-cycle which does not bound a face.
\end{proof}

\begin{lem}\label{lem:all_vertices}
    Let $G \not\cong K_{2,3}$ be a connected non-bipartite quadrangulation of a surface in which every $4$-cycle is facial. Then $A \in V(\Lo(G))$ if and only if $A$ satisfies one of the following conditions:
    \begin{enumerate}
        \item $A$ is a pair of opposite vertices in a face of $G$;\label{lem:all_vertices:1}
        \item $A = N(v)$, for some vertex $v \in V(G)$;\label{lem:all_vertices:2}
        \item $A = \{v\}$, for some vertex $v \in V(G)$.\label{lem:all_vertices:3}
    \end{enumerate}
\end{lem}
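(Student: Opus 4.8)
The plan is to characterize the vertex set $V(\Lo(G))$ directly from its definition, namely $V(\Lo(G)) = \{A \subseteq V(G) \mid \CN^2(A) = A\}$, by first understanding the behavior of the operator $A \mapsto \CN(A)$ on a quadrangulation. I will argue both inclusions. The easier direction is to verify that the three listed types of sets $A$ satisfy $\CN^2(A) = A$; the harder direction is to show these are the only such sets. I expect the main obstacle to be the converse: ruling out all other candidate sets $A$ with $\CN^2(A) = A$, which requires exploiting both the local quadrangular structure and the global hypotheses (non-bipartite, every $4$-cycle facial, $K_{2,3}$-free, no neighborhood dominating another, the latter two supplied by Lemma~\ref{order of hyp}).

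First I would record two elementary observations. Since $G$ is a quadrangulation, the two vertices opposite across a face behave coherently under $\CN$: if $A = \{u,w\}$ are opposite vertices of a face, their common neighbors are exactly the other two (opposite) vertices of that face, so $|\CN(A)| = 2$. I would then check that applying $\CN$ a second time returns $\{u,w\}$, using that every $4$-cycle is facial (so the common neighbors of the opposite pair cannot ``spill over'' into another face) together with $K_{2,3}$-freeness to bound the number of common neighbors at exactly two. This handles case~\eqref{lem:all_vertices:1}. For case~\eqref{lem:all_vertices:3}, a single vertex $\{v\}$, I note $\CN(\{v\}) = N(v)$, so $\CN^2(\{v\}) = \CN(N(v))$; I must show this equals $\{v\}$, i.e.\ that $v$ is the unique common neighbor of its own neighborhood. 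If some $v' \neq v$ also had $N(v) \subseteq N(v')$ that would mean $N(v)$ is dominated, contradicting Lemma~\ref{order of hyp}, so $\CN(N(v)) = \{v\}$ as required. Finally case~\eqref{lem:all_vertices:2}, $A = N(v)$, satisfies $\CN^2(A) = \CN^2(\CN(\{v\})) = \CN(\CN^2(\{v\})) = \CN(\{v\}) = N(v)$ by the idempotence just established; here I would invoke the general fact that $\CN^3 = \CN$ (the operator $\CN$ is a Galois-type closure, so $\CN \CN^2 = \CN$), which makes both \eqref{lem:all_vertices:2} and \eqref{lem:all_vertices:3} automatic once one knows $\CN^2(\{v\}) = \{v\}$.

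For the converse, suppose $A$ satisfies $\CN^2(A) = A$ and is not of type~\eqref{lem:all_vertices:3}, so $|A| \geq 2$. Writing $B = \CN(A)$, the fixed-point condition forces $A = \CN(B)$, so $A$ is itself a set of common neighbors, i.e.\ $A = \CN(B)$ with $B = \CN(A)$ and $A,B$ mutually dual. The strategy is to bound $|A|$. If $|A| \geq 3$, pick any two distinct $a_1, a_2 \in A$ and any three vertices of $B = \CN(A)$; since each $b \in B$ is adjacent to every vertex of $A$, I would extract a $K_{2,3}$ unless $|B| \leq 2$, and symmetrically bound $|A|$, so that $K_{2,3}$-freeness forces $|A| = |B| = 2$ except in the degenerate cases. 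The remaining work is to analyze $|A| = 2$, say $A = \{a_1, a_2\}$ with $\CN(A) = B$: the two vertices $a_1, a_2$ have a common neighbor, and I must show that $A$ together with $\CN(A)$ forms a single face, i.e.\ $A$ is a pair of opposite vertices of a face, which is exactly case~\eqref{lem:all_vertices:1}. This is where I would use that every $4$-cycle is facial: any two nonadjacent vertices with $\ge 2$ common neighbors would produce a nonfacial $4$-cycle unless they have exactly two common neighbors spanning a single face.

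The subtle point deserving the most care is the boundary between the cases: I must check that a set $A$ cannot simultaneously fail all three descriptions while still satisfying $\CN^2(A)=A$, and in particular I should make sure the argument correctly treats pairs $A$ whose two vertices are \emph{adjacent} (which cannot be an opposite pair in a face) versus nonadjacent. For an adjacent pair, $\CN(A)$ is the set of vertices adjacent to both endpoints of an edge; in a quadrangulation an edge lies on two faces and I would argue this forces $\CN^2(A) \neq A$, eliminating adjacent pairs. Throughout, the repeated use of the closure identity $\CN^3 = \CN$ will streamline the bookkeeping, letting me reduce statements about $\CN^2(A) = A$ to statements about which sets arise as $\CN(B)$.
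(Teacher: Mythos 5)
Your main line of argument is correct and is essentially the paper's own proof. The forward direction checks the three types exactly as the paper does, using no-domination and $K_{2,3}$-freeness supplied by Lemma~\ref{order of hyp} (your appeal to the general Galois-connection identity $\CN^3=\CN$ for type~(\ref{lem:all_vertices:2}) is valid, and in fact shows that neighborhoods lie in $V(\Lo(G))$ with no hypotheses at all). The converse is also the paper's argument in a slightly different order: $K_{2,3}$-freeness bounds the two shores, so either $|\CN(A)|=1$, in which case $A=\CN(\CN(A))=N(w)$ --- this is your unnamed ``degenerate case,'' and you should say so explicitly, since it is precisely how type~(\ref{lem:all_vertices:2}) arises --- or $|A|=|\CN(A)|=2$, in which case the facial-$4$-cycle hypothesis makes $A$ a diagonal. (The paper phrases this as: a fixed point that is neither a singleton nor a diagonal has a unique common neighbor $w$, whence $A=N(w)$.) One small repair: your extraction quantifiers are off --- two vertices of $A$ and three of $B$ yield a $K_{2,3}$ when $|A|\geq 2$ and $|B|\geq 3$, not when $|A|\geq 3$; what you need, and what your symmetry remark supplies, is that $|A|\geq 2$ forces $|B|\leq 2$ and $|B|\geq 2$ forces $|A|\leq 2$.

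The one step that would genuinely fail is the ``subtle point'' in your final paragraph: the claim that an adjacent pair cannot be an opposite pair of a face, and that adjacent pairs can be eliminated by showing $\CN^2(A)\neq A$. Both halves are false. Take $G=K_4$ embedded in the projective plane as the antipodal quotient of the cube: it is connected, non-bipartite, not isomorphic to $K_{2,3}$, and its three $4$-cycles are exactly its three faces, so every hypothesis of the lemma holds. Each facial $4$-cycle of $K_4$ has both chords, so the diagonal $A=\{1,3\}$ of the face $1234$ is an adjacent pair, yet $\CN(\{1,3\})=\{2,4\}$ and $\CN(\{2,4\})=\{1,3\}$, i.e.\ $\CN^2(A)=A$. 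So the elimination you propose cannot be carried out. Fortunately it is also unnecessary: condition~(\ref{lem:all_vertices:1}) does not require the opposite pair to be nonadjacent, and your size-bounding argument already treats adjacent pairs uniformly --- any $A$ with $\CN^2(A)=A$ and $|A|=|\CN(A)|=2$ spans a $4$-cycle, which is facial, so $A$ is a diagonal whether or not its two vertices are joined by an edge. Delete that step and your proof is complete.
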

    
\begin{proof}
    For the sake of readability, we call $A \in V(\Lo(G))$ a \emph{diagonal}, \emph{singleton}, or \emph{neighborhood} if $A$ satisfies condition (\ref{lem:all_vertices:1}), (\ref{lem:all_vertices:2}) or (\ref{lem:all_vertices:3}), respectively.

    We will first show that if $A \subseteq V(G)$ satisfies one of the conditions (\ref{lem:all_vertices:1})--(\ref{lem:all_vertices:3}), then $A \in V(\Lo(G))$.
    Since no neighborhood dominates another, $\CN(N(a)) = \{a\}$. Hence, $\{a\} \in V(\Lo(G))$ and $N(a) \in V(\Lo(G))$ for any $a \in V(G)$.

    Now consider a $4$-cycle in $G$ with vertices $a,b,c,d$ (in cyclic order), and set $A = \{a,c\}$. Then $\CN(A) = \{b,d\}$ and $\CN(\{b,d\}) = A$ by the hypothesis. Therefore,
    \[
        \CN^2(A) = \CN(\{b,d\}) = A,
    \]
    which proves that $A \in V(\Lo(G))$.

    It remains to show that if $A \in V(\Lo(G))$, then $A$ satisfies one of the conditions (\ref{lem:all_vertices:1})--(\ref{lem:all_vertices:3}). Suppose $A \in V(\Lo(G))$ and $A$ does not satisfy the conditions (\ref{lem:all_vertices:1}) and (\ref{lem:all_vertices:3}). Then $|A| \geq 2$, and $A$ is not a diagonal, so $A$ has only one common neighbor $w$. So $N(w)=\CN(\{w\})=\CN^2(A)=A$. We conclude that $A$ satisfies condition (\ref{lem:all_vertices:2}).
\end{proof}

\begin{figure}
    \centering
    \begin{tikzpicture}[scale=1.4,every edge/.append style={semithick}]
        \foreach \i in {1,2,...,4}{
            \node[vertex] (p\i) at (45+90*\i:1) {};
        }
        \draw[semithick] (p1) -- (p2) -- (p3) -- (p4) -- (p1);
        \node at ($(p1)+(-0.2,0.2)$) {\small $1$};
        \node at ($(p2)+(-0.2,-0.2)$) {\small $2$};
        \node at ($(p3)+(0.2,-0.2)$) {\small $3$};
        \node at ($(p4)+(0.2,0.2)$) {\small $4$};
    
        \begin{scope}[shift={(3.5,0)}]
            \foreach \i in {1,2,...,4}{
                \node[vertex] (p\i) at (45+90*\i:1) {};
            }
            \node[vertex] (p0) at (0,0) {};
            \draw[semithick] (p1) -- (p2) -- (p3) -- (p4) -- (p1);
            \foreach \i in {1,2,...,4}{
                \draw[semithick] (p0) -- (p\i);
            }
            \node at ($(p0)+(0.4,0)$) {\small $\{1,3\}$};
            \node at ($(p1)+(-0.2,0.2)$) {\small $\{1\}$};
            \node at ($(p2)+(-0.2,-0.2)$) {\small $N(2)$};
            \node at ($(p3)+(0.2,-0.2)$) {\small $\{3\}$};
            \node at ($(p4)+(0.2,0.2)$) {\small $N(4)$};
        \end{scope}
    
        \begin{scope}[shift={(6,0)}]
            \foreach \i in {1,2,...,4}{
                \node[vertex] (p\i) at (45+90*\i:1) {};
            }
            \node[vertex] (p0) at (0,0) {};
            \draw[semithick] (p1) -- (p2) -- (p3) -- (p4) -- (p1);
            \foreach \i in {1,2,...,4}{
                \draw[semithick] (p0) -- (p\i);
            }
            \node at ($(p0)+(0.4,0)$) {\small $\{2,4\}$};
            \node at ($(p1)+(-0.2,0.2)$) {\small $N(1)$};
            \node at ($(p2)+(-0.2,-0.2)$) {\small $\{2\}$};
            \node at ($(p3)+(0.2,-0.2)$) {\small $N(3)$};
            \node at ($(p4)+(0.2,0.2)$) {\small $\{4\}$};
        \end{scope}
    \end{tikzpicture}
    \caption{A face of a quadrangulation (left) and the corresponding faces of the Lovász complex (right).}
    \label{faces_Lovasz}
\end{figure}
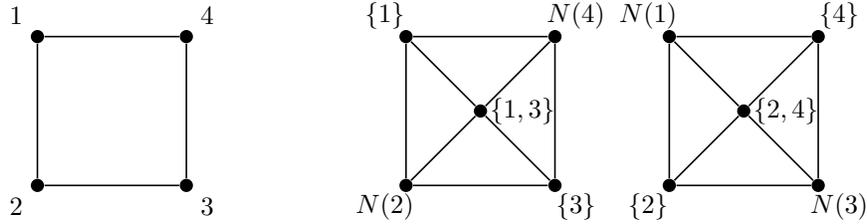

We are now ready to prove the ``forward'' direction of Theorem~\ref{thm:L-quad}.

\begin{thm} \label{thm:dc_lovasz}
    Let $G \not\cong K_{2,3}$ be a connected non-bipartite quadrangulation of a surface $S$ in which every $4$-cycle is facial. Then $\Lo(G)$ is homeomorphic to a double cover of $S$.
\end{thm}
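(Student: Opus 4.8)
The plan is to read the entire simplicial structure of $\Lo(G)$ off Lemma~\ref{lem:all_vertices}, exhibit the canonical free $\Z2$-action, and then show that collapsing this action recovers the quadrangulation $S$. First I would record that, by Lemma~\ref{order of hyp}, $G$ is $K_{2,3}$-free with no neighborhood dominating another; I would also note that every vertex has degree at least $3$ (a degree-$2$ vertex $u$ with $N(u)=\{c,d\}$ has both incident faces of the form $u,c,\,\cdot\,,d$, forcing either a $K_{2,3}$ on $\{c,d\}$ and the three ``opposite'' vertices when they differ, or $G\cong C_4$ when they coincide, both excluded). Consequently the three families of vertices of $\Lo(G)$ from Lemma~\ref{lem:all_vertices}—the singletons $\{v\}$, the diagonals $D$, and the neighborhoods $N(v)$ (now of size $\geq 3$)—are pairwise distinct as subsets of $V(G)$, and $K_{2,3}$-freeness guarantees that $\CN(D)$ is exactly the opposite diagonal of the \emph{unique} face containing $D$. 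Ordering $V(\Lo(G))$ by inclusion then gives a clean three-layer poset, and I would check that its maximal chains are precisely the triples $\{v\}\subset D\subset N(u)$, where $D=\{v,w\}$ is a diagonal of a face $F$ and $u\in\CN(D)$; each face $F$ contributes exactly eight such triangles, grouped into the two ``coned squares'' of Figure~\ref{faces_Lovasz}, one centered at each diagonal of $F$.

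Next I would set up the involution $\CN$ on $V(\Lo(G))$. Since $\CN^2(A)=A$ for every vertex and $\CN$ reverses inclusions, it is a simplicial automorphism swapping $\{v\}\leftrightarrow N(v)$ and interchanging the two diagonals of each face (hence the two coned squares of $F$). I would then verify that the action is fixed-point-free on $|\Lo(G)|$: no vertex is fixed (singletons and neighborhoods differ in size, and the two diagonals of a face are disjoint), no edge is flipped (this would require $\{v\}\subset N(v)$ or $D\subset\CN(D)$, both impossible), and no triangle is setwise invariant (as $\CN$ reverses a chain, its midpoint would have to be a fixed vertex). Thus $\Lo(G)$ is a genuine $\Z2$-space.

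Finally I would identify the quotient with $S$. Let $\widehat G$ be the triangulation of $S$ obtained by placing a vertex $c_F$ in the interior of each quadrilateral face $F$ and coning $F$ from $c_F$; clearly $|\widehat G|\cong S$. I would define a simplicial map $\phi\colon\Lo(G)\to\widehat G$ by $\phi(\{v\})=\phi(N(v))=v$ and $\phi(D)=c_F$ for $D$ a diagonal of $F$, and check that it is non-degenerate, sending each triangle $\{v\}\subset D\subset N(u)$ onto the triangle $\{v,c_F,u\}$ of $\widehat G$. Since $\phi$ is $\Z2$-invariant and the action is free, it descends to a continuous bijection $\overline\phi\colon|\Lo(G)|/\Z2\to S$, provided the fibre of $\phi$ over every point of $S$ is a single $\Z2$-orbit. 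As $|\Lo(G)|$ is compact and $S$ is Hausdorff, $\overline\phi$ is then a homeomorphism, and together with freeness this exhibits $\Lo(G)$ as a double cover of $S$.

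The main obstacle is the fibre computation: I must check that $\phi$ is exactly two-to-one not only over the open triangles of $\widehat G$ but also over its edges and vertices, so that no coincidences beyond the $\Z2$-orbit occur. This is a finite case analysis over the cells $\{v\}$, $\{c_F\}$, $\{v,c_F\}$ and $\{v,c_F,u\}$ of $\widehat G$, in each case listing the chains of $\Lo(G)$ mapping onto the cell and verifying that they form one $\Z2$-orbit; the degree-$\geq 3$ and $K_{2,3}$-free reductions are precisely what prevent the neighborhood and diagonal classes from colliding and what make each diagonal determine a unique face. As a consistency check one can instead verify directly that every edge of $\Lo(G)$ lies in exactly two triangles and that every vertex link is a single cycle, which shows that $\Lo(G)$ is itself a closed surface; I would mention this, but the homeomorphism $\overline\phi$ already yields the theorem.
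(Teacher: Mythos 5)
Your proposal is correct, but it verifies the conclusion by a different route than the paper. The paper's proof, after invoking Lemma~\ref{lem:all_vertices} to see that the maximal simplices of $\Lo(G)$ are the chains $\{v\} \subset D \subset N(u)$ (eight per face of $G$, as in Figure~\ref{faces_Lovasz}), establishes that $\Lo(G)$ is a closed surface by checking vertex links---the link of a diagonal is a $4$-cycle, and the links of $\{v\}$ and $N(v)$ are subdivisions of the link of $v$ in $G$---and then reads the double-cover structure off the face correspondence, leaving the quotient identification implicit. You instead bypass the link computation (relegating it to a consistency check) and directly verify the paper's definition of double cover: you make the $\Z2$-action $\CN$ explicit, prove freeness simplex-by-simplex (a point the paper does not spell out at all), build the coned triangulation $\widehat G$ of $S$ and the non-degenerate simplicial projection $\phi$, and identify $|\Lo(G)|/\Z2$ with $S$ via a continuous bijection from a compact space to a Hausdorff one. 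Your preliminary reduction to minimum degree $3$ is sound and genuinely needed (it is what keeps neighborhoods from colliding with diagonals, so that $\phi$ is well-defined), and your fibre analysis goes through as sketched; the one detail to repair is that your list of cells of $\widehat G$ omits the original edges $\{u,v\}$ of $G$, whose open cells are covered by exactly the two edges $\{\{u\},N(v)\}$ and $\{\{v\},N(u)\}$ of $\Lo(G)$---a single $\Z2$-orbit, handled by the same argument as the cone edges. What your approach buys is an explicit, self-contained verification of the quotient homeomorphism that the paper compresses into ``It follows that $\Lo(G)$ is a triangulation of a double cover of $S$''; what the paper's approach buys is brevity and the observation, useful elsewhere in the paper, that $\Lo(G)$ is itself a closed surface, which your main line of argument only obtains indirectly.
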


\begin{proof}
    Lemma~\ref{lem:all_vertices} implies that the maximal simplices of $\Lo(G)$ are triangles. Each face of $G$ therefore induces eight faces in $\Lo(G)$ (see Figure~\ref{faces_Lovasz}). It follows that $\Lo(G)$ is a triangulation of a double cover of $S$. Indeed, the link of every vertex of $\Lo(G)$ is a cycle: the link of a diagonal is a $4$-cycle, while the links of a singleton ${v}$ and of a neighborhood $N(v)$ are isomorphic to subdivisions of the link of $v$ in $G$, which is a cycle.
\end{proof}

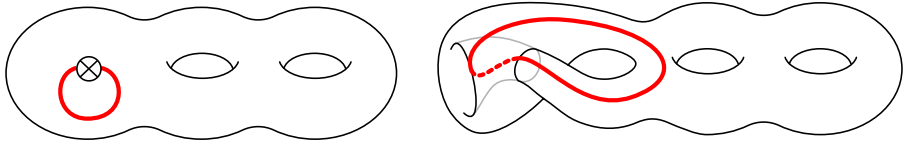
\begin{figure}
    \centering
    \begin{tikzpicture}[scale=0.27]
        \draw[black,semithick] %cross-cap diag1
        (171.5352pt, -497.9958pt) -- (195.9884pt, -521.2158pt)
        ;
        \draw[black,semithick]  %cross-cap diag2
        (195.9884pt, -497.9958pt) -- (171.5352pt, -521.2158pt)
        ;
        \draw[red,ultra thick] %curve around crosscap
        (166.3682pt, -508.3833pt) .. controls (132.643pt, -517.088pt) and (135.5687pt, -579.5563pt) .. (181.7329pt, -579.5762pt)
         -- (181.7329pt, -579.5762pt)
         -- (181.7329pt, -579.5762pt)
         -- (181.7329pt, -579.5762pt) .. controls (239.5314pt, -579.601pt) and (234.3977pt, -514.1229pt) .. (200.6331pt, -508.6017pt)
        ;
        \draw[black,semithick,line cap=round,line join=round]
        (283.7427pt, -436.1364pt) .. controls (269.6824pt, -442.7925pt) and (253.4542pt, -442.7925pt) .. (239.394pt, -436.1364pt)
         -- (239.394pt, -436.1364pt)
         -- (239.394pt, -436.1364pt)
         -- (239.394pt, -436.1364pt) .. controls (222.5034pt, -428.1403pt) and (202.8476pt, -423.5565pt) .. (181.8672pt, -423.5565pt)
         -- (181.8672pt, -423.5565pt)
         -- (181.8672pt, -423.5565pt)
         -- (181.8672pt, -423.5565pt) .. controls (118.8286pt, -423.5565pt) and (67.72566pt, -464.9195pt) .. (67.72566pt, -515.943pt)
         -- (67.72566pt, -515.943pt)
         -- (67.72566pt, -515.943pt)
         -- (67.72566pt, -515.943pt) .. controls (67.72566pt, -566.9666pt) and (118.8286pt, -608.3298pt) .. (181.8672pt, -608.3298pt)
         -- (181.8672pt, -608.3298pt)
         -- (181.8672pt, -608.3298pt)
         -- (181.8672pt, -608.3298pt) .. controls (202.8476pt, -608.3298pt) and (222.5034pt, -603.7457pt) .. (239.394pt, -595.7496pt)
         -- (239.394pt, -595.7496pt)
         -- (239.394pt, -595.7496pt)
         -- (239.394pt, -595.7496pt) .. controls (253.4542pt, -589.0935pt) and (269.6824pt, -589.0935pt) .. (283.7427pt, -595.7496pt)
         -- (283.7427pt, -595.7496pt)
         -- (283.7427pt, -595.7496pt)
         -- (283.7427pt, -595.7496pt) .. controls (300.6334pt, -603.7457pt) and (320.2891pt, -608.3298pt) .. (341.2627pt, -608.3298pt)
         -- (341.2627pt, -608.3298pt)
         -- (341.2627pt, -608.3298pt)
         -- (341.2627pt, -608.3298pt) .. controls (362.2363pt, -608.3298pt) and (381.892pt, -603.7457pt) .. (398.7827pt, -595.7496pt)
         -- (398.7827pt, -595.7496pt)
         -- (398.7827pt, -595.7496pt)
         -- (398.7827pt, -595.7496pt) .. controls (412.8429pt, -589.0935pt) and (429.0711pt, -589.0935pt) .. (443.1313pt, -595.7496pt)
         -- (443.1313pt, -595.7496pt)
         -- (443.1313pt, -595.7496pt)
         -- (443.1313pt, -595.7496pt) .. controls (460.0219pt, -603.7457pt) and (479.6777pt, -608.3298pt) .. (500.6581pt, -608.3298pt)
         -- (500.6581pt, -608.3298pt)
         -- (500.6581pt, -608.3298pt)
         -- (500.6581pt, -608.3298pt) .. controls (563.6968pt, -608.3298pt) and (614.7997pt, -566.9666pt) .. (614.7997pt, -515.943pt)
         -- (614.7997pt, -515.943pt)
         -- (614.7997pt, -515.943pt)
         -- (614.7997pt, -515.943pt) .. controls (614.7997pt, -464.9195pt) and (563.6968pt, -423.5565pt) .. (500.6581pt, -423.5565pt)
         -- (500.6581pt, -423.5565pt)
         -- (500.6581pt, -423.5565pt)
         -- (500.6581pt, -423.5565pt) .. controls (479.6777pt, -423.5565pt) and (460.0219pt, -428.1403pt) .. (443.1313pt, -436.1364pt)
         -- (443.1313pt, -436.1364pt)
         -- (443.1313pt, -436.1364pt)
         -- (443.1313pt, -436.1364pt) .. controls (429.0711pt, -442.7925pt) and (412.8429pt, -442.7925pt) .. (398.7827pt, -436.1364pt)
         -- (398.7827pt, -436.1364pt)
         -- (398.7827pt, -436.1364pt)
         -- (398.7827pt, -436.1364pt) .. controls (381.892pt, -428.1403pt) and (362.2363pt, -423.5565pt) .. (341.2627pt, -423.5565pt)
         -- (341.2627pt, -423.5565pt)
         -- (341.2627pt, -423.5565pt)
         -- (341.2627pt, -423.5565pt) .. controls (320.2891pt, -423.5565pt) and (300.6334pt, -428.1403pt) .. (283.7427pt, -436.1364pt) -- cycle
        ;
        \draw[black,semithick] %cross-cap
        (200.734pt, -509.7149pt) .. controls (200.734pt, -505.0592pt) and (199.088pt, -501.0853pt) .. (195.7959pt, -497.7932pt)
         -- (195.7959pt, -497.7932pt)
         -- (195.7959pt, -497.7932pt)
         -- (195.7959pt, -497.7932pt) .. controls (192.5038pt, -494.5012pt) and (188.5299pt, -492.8552pt) .. (183.8742pt, -492.8551pt)
         -- (183.8742pt, -492.8551pt)
         -- (183.8742pt, -492.8551pt)
         -- (183.8742pt, -492.8551pt) .. controls (179.2185pt, -492.8552pt) and (175.2446pt, -494.5012pt) .. (171.9525pt, -497.7932pt)
         -- (171.9525pt, -497.7932pt)
         -- (171.9525pt, -497.7932pt)
         -- (171.9525pt, -497.7932pt) .. controls (168.6604pt, -501.0853pt) and (167.0144pt, -505.0592pt) .. (167.0144pt, -509.7149pt)
         -- (167.0144pt, -509.7149pt)
         -- (167.0144pt, -509.7149pt)
         -- (167.0144pt, -509.7149pt) .. controls (167.0144pt, -514.3707pt) and (168.6604pt, -518.3446pt) .. (171.9525pt, -521.6366pt)
         -- (171.9525pt, -521.6366pt)
         -- (171.9525pt, -521.6366pt)
         -- (171.9525pt, -521.6366pt) .. controls (175.2446pt, -524.9287pt) and (179.2185pt, -526.5748pt) .. (183.8742pt, -526.5747pt)
         -- (183.8742pt, -526.5747pt)
         -- (183.8742pt, -526.5747pt)
         -- (183.8742pt, -526.5747pt) .. controls (188.5299pt, -526.5748pt) and (192.5038pt, -524.9287pt) .. (195.7959pt, -521.6366pt)
         -- (195.7959pt, -521.6366pt)
         -- (195.7959pt, -521.6366pt)
         -- (195.7959pt, -521.6366pt) .. controls (199.088pt, -518.3446pt) and (200.734pt, -514.3707pt) .. (200.734pt, -509.7149pt)
        ;
        \draw[black,semithick,line cap=round]
        (293.076pt, -500pt) .. controls (297.1815pt, -513pt) and (317.9786pt, -523pt) .. (343.0346pt, -523pt)
         -- (343.0346pt, -523pt)
         -- (343.0346pt, -523pt)
         -- (343.0346pt, -523pt) .. controls (368.0905pt, -523pt) and (388.8878pt, -513pt) .. (392.9932pt, -500pt)
        ;
        \draw[black,semithick,line cap=round]
        (385.2991pt, -509pt) .. controls (381.7531pt, -497pt) and (363.789pt, -488pt) .. (342.1463pt, -488pt)
         -- (342.1463pt, -488pt)
         -- (342.1463pt, -488pt)
         -- (342.1463pt, -488pt) .. controls (320.5036pt, -488pt) and (302.5397pt, -497pt) .. (298.9935pt, -509pt)
        ;
        \draw[black,semithick,line cap=round,xshift=158pt]
        (293.076pt, -500pt) .. controls (297.1815pt, -513pt) and (317.9786pt, -523pt) .. (343.0346pt, -523pt)
         -- (343.0346pt, -523pt)
         -- (343.0346pt, -523pt)
         -- (343.0346pt, -523pt) .. controls (368.0905pt, -523pt) and (388.8878pt, -513pt) .. (392.9932pt, -500pt)
        ;
        \draw[black,semithick,line cap=round,xshift=158pt]
        (385.2991pt, -509pt) .. controls (381.7531pt, -497pt) and (363.789pt, -488pt) .. (342.1463pt, -488pt)
         -- (342.1463pt, -488pt)
         -- (342.1463pt, -488pt)
         -- (342.1463pt, -488pt) .. controls (320.5036pt, -488pt) and (302.5397pt, -497pt) .. (298.9935pt, -509pt)
        ;
        \end{tikzpicture}
    \hfil 
    \begin{tikzpicture}[scale=0.27]
        \draw[black!30,semithick,line cap=round,line join=round]
        (28.7606pt, 198.6851pt) .. controls (67.41151pt, 212.0189pt) and (98.49816pt, 211.3599pt) .. (142.0935pt, 189.9503pt)
         -- (142.0935pt, 189.9503pt)
         -- (142.0935pt, 189.9503pt)
         -- (142.0935pt, 189.9503pt) .. controls (155.2504pt, 170.7468pt) and (140.0432pt, 138.0644pt) .. (116.3633pt, 146.1461pt)
         -- (116.3633pt, 146.1461pt)
         -- (116.3633pt, 146.1461pt)
         -- (116.3633pt, 146.1461pt) .. controls (110.4069pt, 148.1501pt) and (90.71743pt, 149.9833pt) .. (79.74713pt, 137.4308pt)
         -- (79.74713pt, 137.4308pt)
         -- (79.74713pt, 137.4308pt)
         -- (79.74713pt, 137.4308pt) .. controls (67.1635pt, 123.0323pt) and (61.87718pt, 108.1049pt) .. (53.43059pt, 98.54648pt)
        ;
        \draw[black,semithick,line cap=round,line join=round]
        (21.90163pt, 190.3999pt) .. controls (35.03999pt, 222.9408pt) and (57.06142pt, 151.6858pt) .. (57.06142pt, 117.2255pt)
         -- (57.06142pt, 117.2255pt)
         -- (57.06142pt, 117.2255pt)
         -- (57.06142pt, 117.2255pt) .. controls (57.06142pt, 82.7851pt) and (40.03458pt, 100.8768pt) .. (40.03458pt, 100.8768pt)
        ;
        \draw[black,semithick,line cap=round,line join=round]
        (186.0395pt, 168.3487pt) .. controls (202.7384pt, 186.7545pt) and (221.0504pt, 190.4473pt) .. (236.9873pt, 190.7012pt)
         -- (236.9873pt, 190.7012pt)
         -- (236.9873pt, 190.7012pt)
         -- (236.9873pt, 190.7012pt) .. controls (253.6828pt, 190.967pt) and (281.2499pt, 179.5671pt) .. (280.9072pt, 169.8004pt)
         -- (280.9072pt, 169.8004pt)
         -- (280.9072pt, 169.8004pt)
         -- (280.9072pt, 169.8004pt) .. controls (280.579pt, 160.4477pt) and (259.7332pt, 148.6423pt) .. (237.9705pt, 148.4162pt)
         -- (237.9705pt, 148.4162pt)
         -- (237.9705pt, 148.4162pt)
         -- (237.9705pt, 148.4162pt) .. controls (216.2743pt, 148.1909pt) and (205.8828pt, 156.6781pt) .. (186.0371pt, 168.3478pt)
         -- (186.0371pt, 168.3478pt)
         -- (186.0371pt, 168.3478pt)
         -- (186.0371pt, 168.3478pt) .. controls (168.5764pt, 178.6151pt) and (157.4824pt, 184.189pt) .. (142.092pt, 189.9447pt)
         -- (142.092pt, 189.9447pt)
         -- (142.092pt, 189.9447pt)
         -- (142.092pt, 189.9447pt) .. controls (125.1139pt, 197.105pt) and (100.6982pt, 163.5853pt) .. (116.3605pt, 146.152pt)
         -- (116.3605pt, 146.152pt)
         -- (116.3605pt, 146.152pt)
         -- (116.3605pt, 146.152pt) .. controls (130.951pt, 140.7547pt) and (139.9609pt, 129.2045pt) .. (151.544pt, 121.3358pt)
         -- (151.544pt, 121.3358pt)
         -- (151.544pt, 121.3358pt)
         -- (151.544pt, 121.3358pt) .. controls (179.8372pt, 101.6778pt) and (184.9466pt, 91.58969pt) .. (217.6929pt, 83.01947pt)
         -- (217.6929pt, 83.01947pt)
         -- (217.6929pt, 83.01947pt)
         -- (217.6929pt, 83.01947pt) .. controls (242.3145pt, 76.57556pt) and (263.8183pt, 79.76657pt) .. (281.5451pt, 87.32797pt)
         -- (281.5451pt, 87.32797pt)
         -- (281.5451pt, 87.32797pt)
         -- (281.5451pt, 87.32797pt) .. controls (294.7983pt, 92.98117pt) and (307.1575pt, 95.34445pt) .. (324.4563pt, 88.19006pt)
         -- (324.4563pt, 88.19006pt)
         -- (324.4563pt, 88.19006pt)
         -- (324.4563pt, 88.19006pt) .. controls (344.1301pt, 80.05347pt) and (364.5763pt, 70.68671pt) .. (380.2232pt, 70.68671pt)
         -- (380.2232pt, 70.68671pt)
         -- (380.2232pt, 70.68671pt)
         -- (380.2232pt, 70.68671pt) .. controls (401.2037pt, 70.68671pt) and (420.8594pt, 75.27075pt) .. (437.7501pt, 83.26691pt)
         -- (437.7501pt, 83.26691pt)
         -- (437.7501pt, 83.26691pt)
         -- (437.7501pt, 83.26691pt) .. controls (451.8103pt, 89.92303pt) and (468.0386pt, 89.92303pt) .. (482.0988pt, 83.26691pt)
         -- (482.0988pt, 83.26691pt)
         -- (482.0988pt, 83.26691pt)
         -- (482.0988pt, 83.26691pt) .. controls (498.9894pt, 75.27075pt) and (518.6452pt, 70.68671pt) .. (539.6256pt, 70.68671pt)
         -- (539.6256pt, 70.68671pt)
         -- (539.6256pt, 70.68671pt)
         -- (539.6256pt, 70.68671pt) .. controls (602.6642pt, 70.68671pt) and (653.7672pt, 112.05pt) .. (653.7672pt, 163.0735pt)
         -- (653.7672pt, 163.0735pt)
         -- (653.7672pt, 163.0735pt)
         -- (653.7672pt, 163.0735pt) .. controls (653.7672pt, 214.097pt) and (602.6642pt, 255.46pt) .. (539.6256pt, 255.46pt)
         -- (539.6256pt, 255.46pt)
         -- (539.6256pt, 255.46pt)
         -- (539.6256pt, 255.46pt) .. controls (518.6452pt, 255.46pt) and (498.9894pt, 250.8763pt) .. (482.0988pt, 242.8801pt)
         -- (482.0988pt, 242.8801pt)
         -- (482.0988pt, 242.8801pt)
         -- (482.0988pt, 242.8801pt) .. controls (468.0386pt, 236.224pt) and (451.8103pt, 236.224pt) .. (437.7501pt, 242.8801pt)
         -- (437.7501pt, 242.8801pt)
         -- (437.7501pt, 242.8801pt)
         -- (437.7501pt, 242.8801pt) .. controls (420.8595pt, 250.8762pt) and (401.2037pt, 255.46pt) .. (380.2233pt, 255.46pt)
         -- (380.2233pt, 255.46pt)
         -- (380.2233pt, 255.46pt)
         -- (380.2233pt, 255.46pt) .. controls (348.7039pt, 255.46pt) and (329.0266pt, 244.2595pt) .. (312.1046pt, 234.8669pt)
         -- (312.1046pt, 234.8669pt)
         -- (312.1046pt, 234.8669pt)
         -- (312.1046pt, 234.8669pt) .. controls (297.4287pt, 226.721pt) and (286.5509pt, 227.3481pt) .. (269.0003pt, 233.6318pt)
         -- (269.0003pt, 233.6318pt)
         -- (269.0003pt, 233.6318pt)
         -- (269.0003pt, 233.6318pt) .. controls (243.6133pt, 242.7213pt) and (213.7487pt, 249.2956pt) .. (186.274pt, 252.4861pt)
         -- (186.274pt, 252.4861pt)
         -- (186.274pt, 252.4861pt)
         -- (186.274pt, 252.4861pt) .. controls (133.5144pt, 258.631pt) and (29.91606pt, 262.9104pt) .. (8.379456pt, 196.4139pt)
         -- (8.379456pt, 196.4139pt)
         -- (8.379456pt, 196.4139pt)
         -- (8.379456pt, 196.4139pt) .. controls (-0.586074pt, 168.7211pt) and (4.307579pt, 130.651pt) .. (18.81081pt, 105.6681pt)
         -- (18.81081pt, 105.6681pt)
         -- (18.81081pt, 105.6681pt)
         -- (18.81081pt, 105.6681pt) .. controls (26.0347pt, 93.22784pt) and (40.06734pt, 79.12268pt) .. (53.84263pt, 74.47073pt)
         -- (53.84263pt, 74.47073pt)
         -- (53.84263pt, 74.47073pt)
         -- (53.84263pt, 74.47073pt) .. controls (84.0192pt, 64.30811pt) and (130.3151pt, 99.57724pt) .. (151.5332pt, 121.3243pt)
        ;
        \draw[red,ultra thick,line cap=round]
        (50.94615pt, 159.9069pt) .. controls (34.08578pt, 203.2091pt) and (107.8934pt, 237.8801pt) .. (179.302pt, 231.7581pt)
         -- (179.302pt, 231.7581pt)
         -- (179.302pt, 231.7581pt)
         -- (179.302pt, 231.7581pt) .. controls (325.2433pt, 219.2463pt) and (360.1235pt, 151.1pt) .. (274.3556pt, 122.8963pt)
         -- (274.3556pt, 122.8963pt)
         -- (274.3556pt, 122.8963pt)
         -- (274.3556pt, 122.8963pt) .. controls (204.4088pt, 99.8952pt) and (151.8974pt, 178.9907pt) .. (117.8708pt, 177.7642pt)
        ;
        \draw[red,ultra thick,dotted,line cap=round]
        (50.91499pt, 159.9053pt) .. controls (62.85999pt, 138.7637pt) and (88.99489pt, 177.24pt) .. (117.9864pt, 177.7458pt)
        ;
        \draw[black,semithick,line cap=round,xshift=39pt,yshift=679pt]
        (293.076pt, -500pt) .. controls (297.1815pt, -513pt) and (317.9786pt, -523pt) .. (343.0346pt, -523pt)
         -- (343.0346pt, -523pt)
         -- (343.0346pt, -523pt)
         -- (343.0346pt, -523pt) .. controls (368.0905pt, -523pt) and (388.8878pt, -513pt) .. (392.9932pt, -500pt)
        ;
        \draw[black,semithick,line cap=round,xshift=39pt,yshift=679pt]
        (385.2991pt, -509pt) .. controls (381.7531pt, -497pt) and (363.789pt, -488pt) .. (342.1463pt, -488pt)
         -- (342.1463pt, -488pt)
         -- (342.1463pt, -488pt)
         -- (342.1463pt, -488pt) .. controls (320.5036pt, -488pt) and (302.5397pt, -497pt) .. (298.9935pt, -509pt)
        ;
        \draw[black,semithick,line cap=round,xshift=197pt,yshift=679pt]
        (293.076pt, -500pt) .. controls (297.1815pt, -513pt) and (317.9786pt, -523pt) .. (343.0346pt, -523pt)
         -- (343.0346pt, -523pt)
         -- (343.0346pt, -523pt)
         -- (343.0346pt, -523pt) .. controls (368.0905pt, -523pt) and (388.8878pt, -513pt) .. (392.9932pt, -500pt)
        ;
        \draw[black,semithick,line cap=round,xshift=197pt,yshift=679pt]
        (385.2991pt, -509pt) .. controls (381.7531pt, -497pt) and (363.789pt, -488pt) .. (342.1463pt, -488pt)
         -- (342.1463pt, -488pt)
         -- (342.1463pt, -488pt)
         -- (342.1463pt, -488pt) .. controls (320.5036pt, -488pt) and (302.5397pt, -497pt) .. (298.9935pt, -509pt)
        ;
        \end{tikzpicture}
    \caption{Non-orientable surfaces of odd (left) and even (right) genus, with one-sided closed curves highlighted in thick red.}
    \label{fig:surfaces}
\end{figure}

Theorem~\ref{thm:dc_lovasz} implies that the Euler characteristic of $\Lo(G)$ is twice that of $S$. Moreover, the double cover of an orientable surface must also be orientable, which can be seen by considering the Jacobians of transition functions of an atlas.

Nakamoto \emph{et al.}~\cite{NNO04} defined an algebraic invariant of quadrangulations of surfaces called \emph{cycle parity}, denoted $\rho$, and used it to classify non-bipartite quadrangulations of closed non-orientable surfaces. They showed that $\rho$ falls into one of six types: $A$, $B$, $C$, $D$, $E$, or $F$~\cite[Theorems~6--8]{NNO04}. In particular, if the embedding of $G$ contains a Möbius strip, it contains a one-sided cycle (see Figure~\ref{fig:surfaces}). Consequently, $\Lo(G)$ is orientable if and only if all one-sided cycles are odd. Indeed, in the double cover of $G$, each one-sided cycle lifts either to two one-sided cycles (if even) or to a single two-sided cycle (if odd), which occurs precisely when $\rho_G$ is of type $A$, $B$, or $F$.

\begin{cor}\label{cor:class_L}
    Let $G \not\cong K_{2,3}$ be a connected non-bipartite quadrangulation of a surface $S$ in which every $4$-cycle is facial. 
    \begin{enumerate}
        \item If $S \cong \OS_k$, then $\Lo(G) \cong \OS_{2k-1}$.
        \item If $S \cong \NS_k$ and $G$ contains no even one-sided cycle, then $\Lo(G) \cong \OS_{k-1}$.
        \item If $S \cong \NS_k$ and $G$ contains an even one-sided cycle, then $\Lo(G) \cong \NS_{2k-2}$.
        
    \end{enumerate}
\end{cor}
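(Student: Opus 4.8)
The plan is to identify the closed surface $\Lo(G)$ by invoking the classification of surfaces stated in Section~\ref{sec:preliminaries}: a closed surface is determined up to homeomorphism by its Euler characteristic together with its orientability, so it suffices to compute these two invariants of $\Lo(G)$ in each of the three cases. By Theorem~\ref{thm:dc_lovasz}, $\Lo(G)$ is homeomorphic to a double cover of $S$. Before applying the classification I would first record that $\Lo(G)$ is \emph{connected}: since $G$ is connected and non-bipartite, its neighborhood complex $\N(G)$ is connected, and as $\Lo(G)$ is homotopy equivalent to $\N(G)$, the space $\Lo(G)$ is connected as well. Hence $\Lo(G)$ is the connected double cover of $S$, rather than a pair of disjoint copies, and the classification applies to a single surface.

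For the Euler characteristic I would use the observation recorded immediately after Theorem~\ref{thm:dc_lovasz}, namely $\chi(\Lo(G)) = 2\chi(S)$. Recalling that $\chi(\OS_k) = 2-2k$ and $\chi(\NS_k) = 2-k$, this yields $\chi(\Lo(G)) = 4-4k$ when $S \cong \OS_k$, and $\chi(\Lo(G)) = 4-2k$ when $S \cong \NS_k$.

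It then remains to settle orientability and solve for the genus. In case~(1), the double cover of an orientable surface is orientable (as noted after Theorem~\ref{thm:dc_lovasz}), so $\Lo(G)$ is orientable; solving $2-2m = 4-4k$ gives $\Lo(G) \cong \OS_{2k-1}$. In cases~(2) and~(3) I would invoke the criterion established in the paragraph preceding the corollary: $\Lo(G)$ is orientable if and only if every one-sided cycle of $G$ is odd. When $G$ has no even one-sided cycle (case~(2)), all one-sided cycles are odd, so $\Lo(G)$ is orientable, and solving $2-2m = 4-2k$ gives $\Lo(G) \cong \OS_{k-1}$. When $G$ does contain an even one-sided cycle (case~(3)), the criterion forces $\Lo(G)$ to be non-orientable, and solving $2-m = 4-2k$ gives $\Lo(G) \cong \NS_{2k-2}$.

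Given the preceding results, this corollary is largely bookkeeping, and the genuinely substantive input is the orientability criterion, which rests on analyzing how the one-sided cycles of $G$ lift in the double cover (via the cycle parity $\rho$ of Nakamoto \emph{et al.}); that analysis is supplied just before the statement and is the only place where the combinatorics of the embedding enters. The one remaining point demanding care — and the step I expect to be the main obstacle — is establishing connectedness of $\Lo(G)$, since without it the double cover could split into two copies with the same total Euler characteristic, and the classification of surfaces could not be applied to a single surface of the computed invariants.
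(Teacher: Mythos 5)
Your proposal is correct and takes essentially the same route as the paper: Corollary~\ref{cor:class_L} is read off from Theorem~\ref{thm:dc_lovasz} together with the remarks immediately following it (the Euler characteristic of $\Lo(G)$ is twice that of $S$, double covers of orientable surfaces are orientable, and $\Lo(G)$ is orientable if and only if every one-sided cycle of $G$ is odd), combined with the classification of closed surfaces. Your explicit connectedness check is sound and addresses a point the paper leaves implicit --- one can also see it directly by noting that the subgraph of $\Lo(G)$ induced by singletons and neighborhoods is the bipartite double cover of $G$, which is connected precisely because $G$ is connected and non-bipartite --- though it is a routine verification rather than the main obstacle you suggest it to be.
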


We conclude the section by noting that the requirement that all $4$-cycles be facial is necessary for the Lovász complex to be a surface.

\begin{obs}\label{obs:not_surface}
    Let $G$ be a non-bipartite quadrangulation of a surface such that some $4$-cycle is not facial. Then $\Lo(G)$ is not a surface.
\end{obs}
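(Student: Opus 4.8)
The plan is to produce a single vertex of $\Lo(G)$ whose link is not a circle. Since $\Lo(G)$ is a simplicial complex (a subcomplex of $\sd(\N(G))$), its realization is a closed surface only if every vertex link is homeomorphic to $\OS^1$; hence exhibiting one bad link suffices. Fix a non-facial $4$-cycle $v_1v_2v_3v_4$ (in cyclic order) and set $A = \CN^2(\{v_1,v_3\})$ and $B = \CN(\{v_1,v_3\}) = \CN(A)$. These form a dual pair of closed sets, i.e.\ vertices of $\Lo(G)$, with $\{v_1,v_3\}\subseteq A$ and $\{v_2,v_4\}\subseteq B$. I would distinguish two regimes according to whether some pair of vertices has at least three common neighbors (the $K_{2,3}$ case) or not.

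Assume first that every pair of vertices has at most two common neighbors. Then $B=\{v_2,v_4\}$ and dually $A=\{v_1,v_3\}$, so both diagonals $\{v_1,v_3\}$ and $\{v_2,v_4\}$ are \emph{non-facial} diagonal vertices of $\Lo(G)$. I would then find a vertex $v_i$ of the $4$-cycle of degree at least $3$ having a cyclically adjacent vertex $v_{i+1}$ also of degree at least $3$. For such a $v_i$ the singleton $\{v_i\}$ is a vertex of $\Lo(G)$, and $N(v_{i+1})$ is a vertex strictly larger than the diagonal $\{v_i,v_{i+2}\}$. By the local analysis in the proof of Theorem~\ref{thm:dc_lovasz}, the genuine faces of $G$ along the edge $v_iv_{i+1}$ already make $N(v_{i+1})$ a degree-two vertex of the subdivided rotation circle that is $\mathrm{lk}(\{v_i\})$; but since $v_{i+1}\in\CN(\{v_i,v_{i+2}\})$, the non-facial diagonal $\{v_i,v_{i+2}\}$ contributes a third edge incident to $N(v_{i+1})$. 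Thus $\mathrm{lk}(\{v_i\})$ has a vertex of degree $3$ and is not a circle, so $\Lo(G)$ is not a surface.

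In the remaining regime some pair $\{a,c\}$ has at least three common neighbors, so $|B|\ge 3$. Here I would examine the link of $\{a\}$ (or, after a harmless relabeling, of a suitable singleton inside $A$): the closed set $A$ lies strictly above $\{a\}$ and below each neighborhood $N(u)$ with $u\in B$, so $A$ is adjacent in $\mathrm{lk}(\{a\})$ to at least three distinct vertices $N(u)$. A vertex of degree at least $3$ cannot occur in a circle, and if instead $A$ coincides with some $N(u)$ or sits in a chain of four closed sets $\{a\}\subsetneq A\subsetneq N(u)\subsetneq\cdots$, then $\Lo(G)$ contains a $3$-simplex and is not $2$-dimensional; either way $\Lo(G)$ is not a surface. (Equivalently, one may observe that $\mathrm{lk}(A)$ then contains a complete bipartite graph $K_{2,m}$ with $m\ge 3$.)

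The main obstacle I anticipate is the bookkeeping needed to rule out degenerate collapses that would invalidate the degree count: for instance, a singleton $\{v_i\}$ failing to be closed because its neighborhood is dominated, or the ``extra'' diagonal $\{v_i,v_{i+2}\}$ coinciding with some $N(v_{i\pm1})$. The key is that each such collapse can be shown to force a contradiction with the standing hypotheses. Concretely, if both vertices of an opposite pair (say $v_2,v_4$) had degree $2$, then both faces of $G$ incident to $v_2$ would be forced to equal the cycle $v_1v_2v_3v_4$, making it \emph{facial}; and all four vertices cannot have degree $2$, as that would make $G\cong C_4$, which is bipartite. This is exactly where the non-facial hypothesis and the degree constraints of a quadrangulation are used (in the same spirit as Lemma~\ref{order of hyp} and Lemma~\ref{lem:all_vertices}), and verifying that every degenerate configuration collapses to facialness or bipartiteness is the technical heart of the argument.
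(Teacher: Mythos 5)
Your first regime is, in essence, the paper's entire proof: the paper takes the non-facial cycle $a,b,c,d$, observes that the edge $\{a,b\}$ lies on two faces and hence on at least three $4$-cycles, and concludes that the edge $\{\{a\},N(b)\}$ of $\Lo(G)$ lies in at least three triangles, the third vertices being the three diagonals through $a$ --- impossible in a triangulated closed surface. Your execution of this under the assumption that no two vertices have three common neighbors is correct and in fact more careful than the paper's one-line argument: in that regime all diagonals are closed, a domination $N(v_i)\subseteq N(u)$ with $\deg v_i\geq 3$ would give the pair $\{v_i,u\}$ at least three common neighbors (contradiction), and your observation that a degree-$2$ opposite pair forces the cycle to be facial is right, since the fourth vertex of a face $v_1v_2v_3x$ must lie in $\CN(\{v_1,v_3\})=\{v_2,v_4\}$.

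The genuine gap is your second regime, which is exactly the situation your own safeguards are supposed to handle and which the paper's terse proof simply runs past without a case split. First, the ``harmless relabeling'' producing a closed singleton inside $A$ is not justified: nothing in the hypotheses (non-bipartite quadrangulation, some non-facial $4$-cycle) prevents every $x\in A$ from having a dominated neighborhood, so that $\CN^2(\{x\})\neq\{x\}$ for all of them; the collapses-force-contradictions principle you invoke is borrowed from Lemma~\ref{order of hyp}, whose conclusion is available only when \emph{all} $4$-cycles are facial, which is precisely false here. Second, the $N(u)$ for $u\in B$ need not be closed, and $N(u)=N(u')$ for distinct $u,u'\in B$ is not excluded, either of which cuts your count of link-neighbors of $A$ below three. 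Third, the fallback dichotomy is not exhaustive: if $A=N(u)$, a $3$-simplex requires a chain of \emph{four} closed sets and you have exhibited at most three (possibly only two), while the parenthetical $K_{2,m}$ inside $\mathrm{lk}(A)$ requires two closed sets strictly contained in $A$, which you never produce. In a configuration with, say, $A=N(u_1)=N(u_2)$ and no proper subset of $A$ closed, none of your three witnesses (bad vertex link at a singleton, $3$-simplex, $K_{2,m}$ in $\mathrm{lk}(A)$) is established, and the only degenerate configuration that genuinely ``collapses to bipartiteness'' is the fully complete-bipartite one. So the proposal proves the statement only in the first regime; the second regime --- which you correctly identify as the technical heart --- remains open in your write-up.
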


\begin{proof}
    Let $a,b,c,d$ be the vertices of a cycle that is not a face in $G$. Then the edge $\{a,b\}$ is in at least three cycles in $G$, since it is in two faces. So the edge $\{\{a\},N(b)\}$ is in three triangles of $\Lo(G)$, with the third vertex of the triangle being the diagonal containing $a$ of each of the three $4$-cycles.  
\end{proof}

\section{Graphs whose Lovász complex is a surface} 

We now prove the ``backward'' direction of Theorem~\ref{thm:L-quad}.

\begin{thm}\label{thm:backward}
    Let $G$ be a $K_{2,3}$-free graph such that no neighborhood dominates another. If $\Lo(G)$ is a surface, then $G$ is a quadrangulation of $\Lo(G)/\Z2$ in which every $4$-cycle is facial.
\end{thm}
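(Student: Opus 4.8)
The plan is to reverse the forward argument: recover the vertex classification of Lemma~\ref{lem:all_vertices} from the surface hypothesis alone, read off a quadrangulation of $M := \Lo(G)/\Z2$ directly from the simplicial structure of $\Lo(G)$, and only at the very end invoke $K_{2,3}$-freeness to force every $4$-cycle to be facial. Throughout I would work with the poset $(V(\Lo(G)), \subseteq)$, on which the common-neighbour operator $\CN$ is an order-reversing, fixed-point-free involution (fixed-point-free because $v \in A \cap \CN(A)$ would make $v$ adjacent to itself); this is the canonical free $\Z2$-action, so $M$ is again a surface and $\Lo(G) \to M$ is a double cover. Since every singleton lies in $V(\Lo(G))$ — the no-domination hypothesis gives $\CN(N(v)) = \{v\}$, hence $\CN^2(\{v\}) = \{v\}$ — and $\{v\} \subsetneq A$ whenever $v \in A$ and $|A| \geq 2$, the minimal vertices are exactly the singletons $\{v\}$; applying the involution, the maximal vertices are exactly the neighborhoods $N(v)$.

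The heart of the matter is a local computation at a maximal vertex $N(w)$. Because $\Lo(G)$ is a surface it is pure $2$-dimensional and the link of $N(w)$ is a circle; this link is the order complex of $P_w := \{A \in V(\Lo(G)) : A \subsetneq N(w)\}$, which therefore has height $1$, with the singletons $\{u\}$ ($u \in N(w)$) as its minimal elements. A non-minimal element $A$ of $P_w$ is maximal there, hence adjacent in the circle precisely to the singletons $\{u\}$ with $u \in A$, so its degree equals $|A|$; as a cycle is $2$-regular, $|A| = 2$. Any vertex that is neither minimal nor maximal lies strictly below some neighborhood $N(w)$ and is then a non-minimal element of $P_w$, so it is a $2$-element \emph{diagonal} $\{a,c\}$. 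The same circle argument rules out degree-$2$ vertices of $G$ (the link of such an $N(v)$ would be two isolated points), so neighborhoods have size at least $3$ and the three vertex types are disjoint, recovering Lemma~\ref{lem:all_vertices}. Since $\CN$ preserves this middle level, $\CN(\{a,c\}) = \{b,d\}$ is again a diagonal, $a,b,c,d$ is a $4$-cycle, and the link of $\{a,c\}$ in $\Lo(G)$ is the $4$-cycle $\{a\}, N(b), \{c\}, N(d)$.

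Next I would assemble the quadrangulation. Send $v \mapsto [v] := \{\{v\}, N(v)\} \in M$; this is injective (using no-domination for $N(w) = N(w') \Rightarrow w = w'$). Each simplex $\{\{u\}, N(w)\}$ with $u \in N(w)$ is a $\Z2$-orbit projecting to an edge $[u][w]$ of $M$, and these are exactly the edges of $G$. Every triangle of $\Lo(G)$ has the form $\{u\} \subsetneq \{a,c\} \subsetneq N(w)$, so in $M$ it contains a unique diagonal-orbit vertex $[\{a,c\}]$. Grouping the triangles of $M$ by this vertex, the $4$-cycle link of a diagonal shows that its closed star is a quadrilateral disc bounded by the $4$-cycle $[a][b][c][d]$, with only the centre edges in its interior. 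These quadrilaterals cover $M$, and since every edge of $\Lo(G)$ lies in exactly two triangles (the surface condition), each edge $[u][w]$ of $G$ borders exactly two of them. Hence $G$ is the $1$-skeleton of a quadrangulation of $M = \Lo(G)/\Z2$.

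Finally, given any $4$-cycle $abcd$ of $G$, $K_{2,3}$-freeness forces $\CN(\{a,c\}) = \{b,d\}$ and $\CN(\{b,d\}) = \{a,c\}$, since a third common neighbour on either side would yield a $K_{2,3}$; thus $\{a,c\}$ is a diagonal and the face centred at $[\{a,c\}]$ is bounded by $abcd$, so the $4$-cycle is facial. I expect the main obstacle to be the local link computation forcing the middle vertices to have size $2$, together with the bookkeeping needed to verify that the quotient cell structure is a genuine quadrangulation — in particular that the edges of $G$ are precisely the $1$-cells and that each borders exactly two quadrilateral faces.
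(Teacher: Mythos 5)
Your proof is correct, and at the decisive step it takes a genuinely different route from the paper. The paper performs its local count at the \emph{middle} vertices: for a triangle's middle vertex $A$, the circle link of $A$ forces singletons and neighborhoods to alternate around it, giving $|A|=|\CN(A)|=j$ with $A$ in $2j$ triangles, and only then does $K_{2,3}$-freeness enter, to force $j=2$. You perform the count at the \emph{maximal} vertices instead: having shown that neighborhoods are exactly the maximal elements of the vertex poset, you identify the link of $N(w)$ with the order complex of $P_w$ and read off $|A|=2$ from $2$-regularity of the circle, with no appeal to $K_{2,3}$-freeness at all. You then spend $K_{2,3}$-freeness only on the final clause, verifying that an arbitrary $4$-cycle $abcd$ satisfies $\CN(\{a,c\})=\{b,d\}$ and $\CN(\{b,d\})=\{a,c\}$, hence $\{a,c\}\in V(\Lo(G))$ and its quadrilateral star projects to the face bounded by $abcd$; the paper's proof leaves this clause implicit, so your write-up is actually more complete there. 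What your variant buys is a strictly sharper statement: the quadrangular structure of $\widetilde G$ needs only the surface and no-domination hypotheses, since your argument at $N(w)$ makes a middle vertex of size at least $3$ outright impossible (it would have degree at least $3$ in the circle link of any neighborhood above it). Applied to the closed set $\CN^2(\{x,y\})$ for two vertices $x,y$ with three common neighbors, this observation even makes $K_{2,3}$-freeness automatic, so the paper's remark that dropping $K_{2,3}$-freeness relaxes the faces to longer even cycles becomes vacuous whenever $\Lo(G)$ is genuinely a surface --- a consequence the paper's middle-vertex count does not reveal. One point worth a sentence in a final version: freeness of the $\Z2$-action on the realization needs not just fixed-point-freeness on vertices but that no simplex is invariant (no edge $\{A,\CN(A)\}$ can occur, since $A\subseteq\CN(A)$ would force a loop, and an invariant triangle would fix its middle vertex); the paper is, however, no more explicit on this than you are.
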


\begin{proof}
    Singletons and neighborhoods are in $V(\Lo(G))$ by Lemma~\ref{lem:all_vertices}. Therefore, any triangle in $\Lo(G)$ is of the form $\{\{u\}, A , N(v)\}$, where $u,v$ are vertices of $G$ and $u \in A \subset N(v)$. Without using the hypothesis that $G$ is $K_{2,3}$-free, we can already say that $A$ is contained in an even number of triangles. Indeed, when one considers the union of the triangles containing $A$, the edges $\{A,\{u\}\}$ and $\{A,N(v)\}$ incident to $A$ must alternate, as $N(v)$ cannot be a singleton.

    For $1 \leq i \leq j$, note $N(v_i)$ the neighborhoods containing $A$, and $\{u_i\}$ the singletons contained in $A$. Equivalently, $u_i$ are the elements of $A$ and $v_i$ the common neighbors of $A$. Since $\{u\} \subsetneq A$, $j \geq 2$. If $j \geq 3$, three elements of $A$ and two common neighbors would produce a $K_{2,3}$. So $A$ is contained in four triangles.

    Consider the subgraph $\widetilde G$ of $\Lo(G)$ induced by singletons and neighborhoods. This is a symmetric quadrangulation of $\Lo(G)$. No neighborhood dominates another, so the free  $\Z2$-action associating $\{v\}$ to $N(v)$ is well-defined, and the orbit space $\widetilde G/\Z2$ is isomorphic to $G$. Indeed, $\{\{v\},N(u)\} \in \Lo(G)$ if and only if $\{\{u\},N(v)\} \in \Lo(G)$. Since $\Z2$ is a finite group acting on the surface $\Lo(G)$, the orbit space $\Lo(G)/\Z2$ is also a surface.
\end{proof}

Note that if we drop the hypothesis of $G$ being $K_{2,3}$-free from Theorem~\ref{thm:backward}, then $G$ is embedded in $\Lo(G)$ with all faces bounded by an even cycle (not necessarily a $4$-cycle). 

The following is a straightforward corollary of Theorem~\ref{thm:backward} and the classification of surfaces.

\begin{cor}\label{cor:class_graphs}
    Let $G$ be a $K_{2,3}$-free graph such that no neighborhood dominates another.
    \begin{enumerate}
        \item If $\Lo(G) \cong \OS_{2k}$, then $G$ is a quadrangulation of $\NS_{2k+1}$ such that every $4$-cycle of $G$ bounds a face, and $G$ contains no even one-sided cycle.
        \item If $\Lo(G) \cong \NS_{2k}$, then $G$ is a quadrangulation of $\NS_{k+1}$ such that every $4$-cycle of $G$ bounds a face, and $G$ contains an even one-sided cycle if and only if $k$ is odd.
        \item If $\Lo(G) \cong \OS_{2k-1}$, then $G$ is a quadrangulation of either $\OS_k$ or $\NS_{2k}$ such that every $4$-cycle of $G$ bounds a face, and $G$ contains an even one-sided cycle.
    \end{enumerate}
\end{cor}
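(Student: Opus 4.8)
The plan is to run the argument of Corollary~\ref{cor:class_L} in reverse: instead of computing $\Lo(G)$ from a quadrangulation, I would start from the surface $\Lo(G)$ and recover the base surface $S := \Lo(G)/\Z2$. Theorem~\ref{thm:backward} already supplies the combinatorial content, namely that under the stated hypotheses $G$ is a quadrangulation of $S$ with every $4$-cycle facial; what remains is purely topological. Since the canonical $\Z2$-action on $\Lo(G)$ is free, the quotient map exhibits the connected surface $\Lo(G)$ as a double cover of $S$, so $\chi(\Lo(G)) = 2\chi(S)$, and a connected double cover of an orientable surface is orientable. Together with the orientability dictionary recorded just before Corollary~\ref{cor:class_L}---that $\Lo(G)$ is orientable exactly when $G$ has no even one-sided cycle---these are the only inputs needed beyond the classification of surfaces.

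The first step is to solve the Euler-characteristic equation. Using $\chi(\OS_m)=2-2m$ and $\chi(\NS_m)=2-m$, the relation $\chi(S)=\tfrac{1}{2}\chi(\Lo(G))$ determines $\chi(S)$, and its parity is decisive: an orientable surface has even Euler characteristic, so whenever $\chi(S)$ is odd the base $S$ is forced to be non-orientable and is then pinned down uniquely, whereas when $\chi(S)$ is even both an orientable and a non-orientable $S$ remain compatible with the Euler characteristic alone. This parity dichotomy is exactly what produces the three cases of the statement.

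I would then dispatch the cases in turn. For $\Lo(G)\cong\OS_{2k}$ one gets $\chi(S)=1-2k$, odd, forcing $S\cong\NS_{2k+1}$; orientability of $\Lo(G)$ feeds into the dictionary to yield the one-sided-cycle conclusion. For $\Lo(G)\cong\NS_{2k}$ the cover is non-orientable, so $S$ cannot be orientable, and $\chi(S)=1-k$ gives $S\cong\NS_{k+1}$. For $\Lo(G)\cong\OS_{2k-1}$ one has $\chi(S)=2-2k$, even, so $S\cong\OS_k$ and $S\cong\NS_{2k}$ are both consistent, and the two are separated according to whether $G$ carries a one-sided cycle at all, i.e.\ according to the orientability of $S$. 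In each case the presence, absence, and parity of one-sided cycles is read off from the orientability dictionary, refined where necessary by the lifting behaviour of one-sided cycles encoded in the cycle-parity invariant $\rho_G$ (types $A$, $B$, $F$ versus the rest).

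The main obstacle is precisely the even-$\chi(S)$ situation, Case~3, where the Euler characteristic does not determine $S$ and one must invoke the orientability data to choose between the orientable and non-orientable candidate. Making the one-sided-cycle clauses precise---tracking how an even versus odd one-sided cycle of $G$ lifts to a one-sided or two-sided cycle in the double cover, and how this interacts with the genus parity---is the delicate part; everything else reduces to the Euler-characteristic arithmetic above combined with the classification of surfaces.
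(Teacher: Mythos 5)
Your overall route is exactly the one the paper intends: the paper offers no more proof than ``straightforward corollary of Theorem~\ref{thm:backward} and the classification of surfaces,'' and your execution of that plan---Theorem~\ref{thm:backward} to realize $G$ as a quadrangulation of $S=\Lo(G)/\Z2$ with every $4$-cycle facial, the relation $\chi(\Lo(G))=2\chi(S)$ for the free double cover, parity of $\chi(S)$ forcing non-orientability of the base in case (1), orientability of double covers of orientable surfaces excluding an orientable base in case (2), and the even-$\chi$ ambiguity in case (3)---is correct as far as it goes. Your identifications of the base surfaces ($\NS_{2k+1}$, $\NS_{k+1}$, and the $\OS_k$/$\NS_{2k}$ dichotomy) all check out, as does the clause in case (1).

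The genuine gap is precisely the part you defer as ``the delicate part'': the one-sided-cycle clauses in cases (2) and (3), which are the only content beyond Euler-characteristic arithmetic, and they cannot simply be ``read off from the orientability dictionary'' in the form the statement gives them. The dictionary preceding Corollary~\ref{cor:class_L} says $\Lo(G)$ is orientable if and only if $G$ has no even one-sided cycle. In case (2), $\Lo(G)\cong\NS_{2k}$ is non-orientable, so the dictionary yields an even one-sided cycle for \emph{every} $k$; the parity of $k$ enters only through the parity of the genus of the base $\NS_{k+1}$ (equivalently, whether $\rho_G$ is of type $C$ or of type $D$/$E$), not through the existence of such a cycle, so the ``if and only if $k$ is odd'' clause does not follow from the tools you invoke, and your sketch makes no attempt to bridge this. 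In case (3), $\Lo(G)\cong\OS_{2k-1}$ is orientable, so the same dictionary (and Corollary~\ref{cor:class_L}(2), which produces $\OS_{m-1}$ precisely from a base $\NS_m$ with \emph{no} even one-sided cycle, here $m=2k$) gives that all one-sided cycles of $G$ are odd in the $\NS_{2k}$ alternative, and that $G$ has no one-sided cycles at all in the $\OS_k$ alternative---the opposite of the clause ``$G$ contains an even one-sided cycle.'' A faithful completion of your argument therefore yields clauses that disagree with the statement as printed; you needed either to derive the printed clauses (which the dictionary-based plan cannot do) or to flag the discrepancy explicitly, and doing neither leaves the only non-routine step of the corollary unresolved.
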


\section{Cohom-index of the Lovász complex of quadrangulations}\label{section5}

The goal of the last section is to prove Theorem~\ref{thm:archdeacon-ind}. Rather than working with the $\Z2$-index directly, we will prove a result on the cohom-index, and apply the inequality~\eqref{eq:inequalities}.

We adapt the proof in Archdeacon \emph{et al.}~\cite{AHNNO01}.
 
\begin{thm} \label{thm:cohom_quad}
    Let $G$ be a non-bipartite quadrangulation of a non-orientable surface $S$ in which every $4$-cycle is facial. Then $\cohom-ind(\Lo(G))= 2$ if $G$ is odd, otherwise $\cohom-ind(\Lo(G))= 1$.
\end{thm}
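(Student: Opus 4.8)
The plan is to reduce the computation of $\cohom-ind(\Lo(G))$ to a single cup product in $H^*(S;\Z2)$ and then to identify that cup product with the oddness of $G$. Since $G$ is non-bipartite it is not isomorphic to $K_{2,3}$, and a quadrangulation of a (connected) surface is connected, so Theorem~\ref{thm:dc_lovasz} applies: $\Lo(G)$ is a double cover of $S$, with canonical free $\Z2$-action and orbit space $\Lo(G)/\Z2 \cong S$. This double cover is classified by a class $w \in H^1(S;\Z2)$, which I would identify explicitly. By the description of the $1$-skeleton of $\Lo(G)$ (each edge $\{u,v\}$ of $G$ lifts to the edges $\{\{u\},N(v)\}$ and $\{N(u),\{v\}\}$, which join opposite sheets), every edge of $G$ switches sheets, so $w$ is the \emph{parity cocycle}: $\langle w,[\gamma]\rangle$ is the length of $\gamma$ modulo $2$ for every cycle $\gamma$ of $G$. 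Because $G$ is non-bipartite, $w \neq 0$; because $S$ is a surface, $H^n(S;\Z2)=0$ for $n\geq 3$. As $\cohom-ind(\Lo(G))$ is by definition the height of the first Stiefel–Whitney class $w$ of the double cover $\Lo(G)\to S$, namely $\max\{n:w^n\neq 0\}$, it equals $2$ if $w^2\neq 0$ and $1$ if $w^2=0$. Thus the theorem is equivalent to the assertion that $w^2\neq 0$ if and only if $G$ is odd.

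Next I would invoke Wu's formula on the closed surface $S$: for every $w\in H^1(S;\Z2)$ one has $w\cup w=\mathrm{Sq}^1 w=v_1\cup w$, where $v_1=w_1(S)$ is the Wu class, equal to the first Stiefel–Whitney class of $S$. Hence it suffices to decide whether $w\cup w_1(S)\neq 0$ in $H^2(S;\Z2)\cong\Z2$. Dualizing, let $o\in H_1(S;\Z2)$ be the Poincaré dual of $w_1(S)$; since $S$ is non-orientable, $w_1(S)\neq 0$, and by the standard fact that a simple closed curve $C$ on $S$ has orientable complement exactly when $[C]=o$, the class $o$ is the ``orientation class'' of $S$. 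Then $(w\cup w_1(S))[S]=\langle w,o\rangle$.

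The forward implication is then immediate. If $G$ is odd, fix an odd cycle $C$ of $G$ such that cutting $S$ along $C$ yields an orientable surface. Then $C$ is a simple closed curve with orientable complement, so $[C]=o$, and therefore $(w\cup w_1(S))[S]=\langle w,o\rangle=\langle w,[C]\rangle$ equals the parity of $C$, which is $1$. Hence $w^2\neq 0$ and $\cohom-ind(\Lo(G))=2$.

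The converse---if $w^2\neq 0$ then $G$ is odd---is where I expect the real work, and it is here that I would adapt the winding-number argument of Archdeacon \emph{et al.}~\cite{AHNNO01}. The hypothesis gives $\langle w,o\rangle=1$, so the orientation class $o$ is ``odd'' for the parity cocycle; what must be produced is a \emph{single simple cycle of $G$} lying in the class $o$, which is then automatically odd and has orientable complement by the cutting criterion. The crux is passing from the homology class $o$ to an actual odd circuit of the embedded graph: a priori $o$ is only represented by a $\Z2$-cycle of edges, which decomposes into several edge-disjoint simple cycles, and one must exhibit a single simple representative along which $S$ cuts open orientably. I would handle this by translating Archdeacon \emph{et al.}'s winding-number bookkeeping along a curve dual to $w_1(S)$ into the present setting, using that all $4$-cycles are facial to control how the curve crosses the quadrilateral faces of $G$; the parity of the winding number computes $\langle w,o\rangle$ and simultaneously locates the desired odd cycle. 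Equivalently, I would argue the contrapositive: if no odd cycle of $G$ has orientable complement, then every simple cycle of $G$ in the class $o$ is even, forcing $\langle w,o\rangle=0$ and hence $w^2=0$ and $\cohom-ind(\Lo(G))=1$.
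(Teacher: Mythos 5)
Your reduction is genuinely different from the paper's, and its first half is arguably cleaner. The paper never invokes Wu's formula: it computes the relevant cup square combinatorially, by adapting the winding-number bookkeeping of \cite{AHNNO01} to the symmetric quadrangulation $\widetilde G$ of $\Lo(G)$ (the congruences \eqref{eq:cong1} and \eqref{eq:cong2}) and then appealing to Matoušek's ``gray triangle'' criterion \cite[pp.~103--106]{Mat03}, which reads off the height of the Stiefel--Whitney class from the parity of the number of gray triangles in a symmetric triangulation. Your route is sound up to the point you flag: $G$ is connected (a graph with all faces bounded by $4$-cycles in a connected surface is connected) and $G \not\cong K_{2,3}$, so Theorem~\ref{thm:dc_lovasz} applies; the classifying class $w$ of the cover $\Lo(G) \to S$ is indeed the parity cocycle, since edges of $\widetilde G$ join singletons to neighborhoods and the inclusion $G \subset S$ is $\pi_1$-surjective (the embedding is cellular); and the chain $w^2 = \operatorname{Sq}^1 w = w_1(S) \cup w$, $\langle w \cup w_1(S), [S]\rangle = \langle w, o\rangle$, together with the criterion that a simple closed curve has orientable complement exactly when its class is $o$, is a correct conceptual derivation of what the paper establishes by hand. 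Notably, your computation yields for free that \emph{all} orientizing cycles of $G$ have the same parity, a fact only implicit in the paper's argument. The forward implication (odd $\Rightarrow \cohom-ind(\Lo(G)) = 2$) is complete as written.

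The converse, however, has a genuine gap, and it sits exactly where you placed it. Your contrapositive reads: if $G$ is not odd, then every cycle of $G$ in class $o$ is even, ``forcing $\langle w, o\rangle = 0$.'' That inference is valid only if $G$ actually contains a cycle in class $o$, i.e.\ an orientizing cycle. If no cycle of $G$ lies in class $o$, the hypothesis is vacuous, and evaluating $\langle w, o\rangle$ on an even-subgraph representative $C_1 \cup \dots \cup C_m$ with $\sum_i [C_i] = o$ gives nothing: no individual $C_i$ is orientizing, so non-oddness of $G$ places no constraint on their parities. Existence of a single-cycle representative is not formal---a bouquet of two one-sided loops on the Klein bottle is cellularly embedded with a single quadrilateral face, yet neither loop lies in the orientizing class---so the claim has real content for honest quadrangulations. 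It is worth noting that the paper's proof carries the same dependency: it opens with ``Let $C$ be an orientizing $\ell$-cycle in $G$'' with no existence argument, implicitly importing this from the framework of \cite{AHNNO01}. The efficient repair is therefore not your proposed re-run of the winding-number bookkeeping, which your Wu-formula computation has already made redundant, but simply to prove or cite the existence of an orientizing cycle in a quadrangulation of a non-orientable surface. Once a single cycle $C$ with $[C] = o$ is in hand, $\langle w, o\rangle$ equals the parity of $C$, and both directions of the theorem follow in one line from your reduction.
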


\begin{proof}
    Suppose $G$ has vertices $v_1, \ldots, v_n$. Let $\widetilde G$ be the subgraph of the $1$-skeleton of $\Lo(G)$ induced by the singletons and the neighborhoods. Note that $\widetilde G$ is a quadrangulation of $\Lo(G)$, and denote the set of its faces by $F$. Define a labelling $\lambda:V(\widetilde G) \to \{\pm 1, \ldots, \pm n\}$ by $\lambda(\{v_i\}) = i$ and $\lambda(N(v_i)) = -i$.

    Let $C$ be an orientizing $\ell$-cycle in $G$. Using the same idea as~\cite{AHNNO01}, define two orientations on couples of faces and edges (in that face); one from a fixed orientation of the orientable surface $S \setminus C$, the other from the labeling $\lambda$ by orienting $\{u,v\}$ from $u$ to $v$ if $|\lambda(u)| < |\lambda(v)|$. For each face $f \in F$, let $\varphi(f)$ be the difference between the number of edges in $f$ where the two orientations agree and the number where they disagree, as in~\cite{AHNNO01}. We obtain the congruence
    \begin{equation}\label{eq:cong1}
        \sum_{f \in F} \varphi(f) \equiv 2\ell \pmod 4.
    \end{equation}

    If the vertices in a face $f$ are not in cyclic order with respect to $|\lambda(\cdot)|$, then $\varphi(f)=0$, and if they are, then $\varphi(f) \in \{-2,2\}$. Therefore, if $r$ is the number of faces in $\widetilde G$ in cyclic order,
    \begin{equation}\label{eq:cong2}
    \sum_{f \in F} \varphi(G) \equiv 2r \pmod 4.
    \end{equation}
    It follows from \eqref{eq:cong1} and \eqref{eq:cong2} that $r$ and $\ell$ have the same parity.

    We will now triangulate $\widetilde G$ by dividing each face into two triangles (in a symmetric way). We wish to count the parity of the number of triangles whose middle vertex (by absolute value) has a different sign from the other two. Such triangles are called \emph{gray} in Matoušek~\cite[pp.\ 103--106]{Mat03}. It is not hard to verify that the only faces of $\widetilde G$ with exactly one gray triangle have vertices in cyclic order. Therefore, the number of gray triangles has the same parity as $r$, which has the same parity as $\ell$.

    We conclude that if $G$ is odd, then $\cohom-ind(\Lo(G)) = 2$, otherwise $\cohom-ind(\Lo(G)) = 1$.
\end{proof}

Theorem~\ref{thm:archdeacon-ind} is an easy consequence of Theorem~\ref{thm:cohom_quad}.
\begin{proof}[Proof of Theorem~\ref{thm:archdeacon-ind}]
    Let $G$ be an odd quadrangulation of a non-orientable surface $S$. If $G$ contains non-facial $4$-cycles, then $\Lo(G)$ is not a surface, as established in Observation~\ref{obs:not_surface}. To address this, consider the subcomplex $\mathsf K \subseteq \Lo(G)$ induced by the diagonals, singletons, and neighborhoods. This subcomplex $\mathsf K$ forms a double cover of $S$, and by an argument analogous to that in the proof of Theorem~\ref{thm:cohom_quad}, we find that $\cohom-ind(\mathsf K) = 2$. Applying the monotonicity of the index along with the inequality~\eqref{eq:inequalities}, we conclude that
    $\ind(\Lo(G)) \geq \ind(\mathsf K) \geq 2$.
\end{proof}

It is worth noting that while the level of precision provided by Nakamoto \emph{et al.}~\cite{NNO04} is not required to determine the homeomorphism class of the Lovász complex, it \emph{is} essential for determining the $\Z2$-index. In particular, two quadrangulations with homeomorphic Lovász complexes may have different $\Z2$-index or chromatic number.

It is shown in Gonçalves \emph{et al.}~\cite{GHZ10} that the cohom-index of a surface is equal to its $\Z2$-index. The following corollary follows. 

\begin{cor}
    Let $G$ be a non-bipartite quadrangulation of a non-orientable surface $S$ in which each $4$-cycle is facial. 
    \begin{enumerate}
        \item If $S \cong \NS_{2k+1}$ and $\rho_G$ is of type $A$ or $B$, then $\ind(\Lo(G))= 2$.
        \item If $S \cong \NS_{2k+1}$ and $\rho_G$ is of type $C$, then $\ind(\Lo(G))= 1$.
        \item If $S \cong \NS_{2k}$ and $\rho_G$ is of type $D$ or $E$, then $\ind(\Lo(G))= 1$.
        \item If $S \cong \NS_{2k}$ and $\rho_G$ is of type $F$, then $\ind(\Lo(G))= 2$.
    \end{enumerate}
\end{cor}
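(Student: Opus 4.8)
The plan is to reduce $\ind(\Lo(G))$ to the parity of $G$, recast that parity as a cup-product condition, and then evaluate it for each of the six types using the classification of Nakamoto \emph{et al.}~\cite{NNO04}. First I would record that the hypotheses of the earlier results hold: $K_{2,3}$ is bipartite, so the non-bipartite graph $G$ satisfies $G \not\cong K_{2,3}$, and a quadrangulation of a connected surface is connected. Hence Theorem~\ref{thm:dc_lovasz} applies and $\Lo(G)$ is a closed surface, so by Gonçalves \emph{et al.}~\cite{GHZ10} we have $\ind(\Lo(G)) = \cohom-ind(\Lo(G))$. Theorem~\ref{thm:cohom_quad} then gives $\ind(\Lo(G)) = 2$ if $G$ is odd and $\ind(\Lo(G)) = 1$ otherwise. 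The whole corollary thus reduces to deciding, type by type, whether $G$ is odd.

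Next I would make the parity of $G$ cohomological. The double cover $\Lo(G) \to S$ is classified by a class $\omega \in H^1(S;\Z2)$; since the deck involution exchanges $\{v\}$ with $N(v)$, a single edge of $G$ switches sheets, so $\omega$ evaluated on a cycle equals its length modulo $2$---that is, $\omega$ is the parity class of $G$. Writing $w_1 = w_1(S)$ and using the Wu relation $\omega \cup \omega = \omega \cup w_1$, valid on a closed surface, the height of $\omega$---and hence $\cohom-ind(\Lo(G))$---equals $2$ exactly when $\omega \cup w_1 \neq 0$ in $H^2(S;\Z2)$. Moreover an orientizing cycle $C$ represents the Poincaré dual of $w_1$, so $\langle \omega, [C] \rangle = (\omega \cup w_1)[S]$; since $\langle \omega,[C]\rangle$ is the length of $C$ modulo $2$, this says precisely that $G$ is odd if and only if $\omega \cup w_1 \neq 0$. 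This re-proves Theorem~\ref{thm:cohom_quad} and isolates the one invariant to be computed.

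The final and principal step is to evaluate $\omega \cup w_1$ for each type, via~\cite[Theorems~6--8]{NNO04}. The genus enters through $w_1 \cup w_1 = \chi(S) \bmod 2$, which is nonzero on $\NS_{2k+1}$ and zero on $\NS_{2k}$. On odd genus I would show that types $A$ and $B$ yield $\omega \cup w_1 \neq 0$ while type $C$ yields $\omega \cup w_1 = 0$; on even genus, where $w_1$ is isotropic, I would show that type $F$ is precisely the one whose parity class pairs nontrivially with $w_1$, whereas types $D$ and $E$ pair trivially. Feeding this into the dichotomy above gives $\ind(\Lo(G)) = 2$ for types $A$, $B$, $F$ and $\ind(\Lo(G)) = 1$ for types $C$, $D$, $E$, as claimed. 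I expect this matching to be the crux: it amounts to extracting, from the algebraic cycle-parity invariant $\rho$ of~\cite{NNO04}, the value of the single $\Z2$-pairing $\omega \cup w_1$---equivalently the parity of an orientizing cycle---uniformly across all six types. It is exactly here that the full strength of the classification in~\cite{NNO04}, and not merely the homeomorphism type of $\Lo(G)$, is needed, which also explains why two quadrangulations with homeomorphic Lovász complexes may nonetheless differ in $\Z2$-index.
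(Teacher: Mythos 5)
Your first paragraph is precisely the paper's proof: the authors derive the corollary in one line from Theorem~\ref{thm:cohom_quad} together with the result of Gon\c{c}alves \emph{et al.}~\cite{GHZ10} that the cohom-index of a surface equals its $\Z2$-index, outsourcing the determination of which cycle-parity types are odd to \cite[Theorems~6--8]{NNO04}; your preliminary checks ($K_{2,3}$ is bipartite, a quadrangulation of a connected surface is connected, so Theorem~\ref{thm:dc_lovasz} makes $\Lo(G)$ a closed surface) are correct and indeed left implicit in the paper. Where you genuinely diverge is in how the oddness of each type is decided. The paper proves Theorem~\ref{thm:cohom_quad} by the combinatorial orientation-count and gray-triangle argument adapted from \cite{AHNNO01}, and then simply reads off from \cite{NNO04} that types $A$, $B$, $F$ are the odd ones; you instead set up a cohomological criterion --- $\omega$ the parity class classifying the double cover, the Wu relation $\omega \cup \omega = \omega \cup w_1$, orientizing cycles dual to $w_1$ --- so that ``$G$ is odd iff $\omega \cup w_1 \neq 0$'' and the type-by-type verdict becomes a $\Z2$-pairing computation. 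This is correct and buys something real: it re-proves Theorem~\ref{thm:cohom_quad} without the winding-number count, explains the odd/even genus dichotomy via $w_1 \cup w_1 = \chi(S) \bmod 2$, and substantiates the paper's closing remark that the homeomorphism type of $\Lo(G)$ alone does not determine the index. Two caveats. First, the decisive step --- evaluating $\omega \cup w_1$ on each of the six types --- is only announced (``I would show''), not executed; since it requires the actual definitions of types $A$--$F$ from \cite{NNO04}, this is where all the remaining content sits, although the paper itself is no more explicit, so your omission matches the granularity of the original proof. Second, in the equivalence ``odd iff $\omega \cup w_1 \neq 0$'' the forward direction is immediate, but the converse tacitly assumes that the Poincar\'e dual of $w_1$ is realized by an actual cycle of the graph $G$ (one pushes a representing simple closed curve into the $1$-skeleton across the disk faces); a sentence to this effect would close that small gap.
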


Note that Musin~\cite{Mus12} extended the Borsuk--Ulam theorem to manifolds other than the sphere; he called them \emph{BUT-manifolds}. The above corollary implies that $\Lo(G)$ with its canonical $\Z2$-action is a $2$-dimensional BUT-manifold if and only if $\rho_G$ is of type $A$, $B$, or $F$. 

\section{Acknowledgements}
The authors would like to express their gratitude to Christian Blanchet and Juan Paucar for helpful disussions. 

\bibliographystyle{plain}

\def\soft#1{\leavevmode\setbox0=\hbox{h}\dimen7=\ht0\advance \dimen7
  by-1ex\relax\if t#1\relax\rlap{\raise.6\dimen7
  \hbox{\kern.3ex\char'47}}#1\relax\else\if T#1\relax
  \rlap{\raise.5\dimen7\hbox{\kern1.3ex\char'47}}#1\relax \else\if
  d#1\relax\rlap{\raise.5\dimen7\hbox{\kern.9ex \char'47}}#1\relax\else\if
  D#1\relax\rlap{\raise.5\dimen7 \hbox{\kern1.4ex\char'47}}#1\relax\else\if
  l#1\relax \rlap{\raise.5\dimen7\hbox{\kern.4ex\char'47}}#1\relax \else\if
  L#1\relax\rlap{\raise.5\dimen7\hbox{\kern.7ex
  \char'47}}#1\relax\else\message{accent \string\soft \space #1 not
  defined!}#1\relax\fi\fi\fi\fi\fi\fi}

\end{document}